\documentclass[11pt,reqno]{amsart}
\newcommand{\mysection}[1]{\section{#1}
      \setcounter{equation}{0}}

\newcommand\cbrk{\text{$]$\kern-.15em$]$}} 
\newcommand\opar{\text{\raise.2ex\hbox{${\scriptstyle | }$}\kern-.34em$($} }

\usepackage{color}

\DeclareMathOperator*{\esssup}{ess\,sup}

\newtheorem{theorem}{Theorem}[section]
\newtheorem{lemma}[theorem]{Lemma}

\theoremstyle{definition}
\newtheorem{assumption}{Assumption}[section]
\newtheorem{definition}{Definition}[section]

\theoremstyle{remark}
\newtheorem{remark}{Remark}[section]

\newcommand\bH{\mathbb{H}}

\newcommand\bR{\mathbb{R}}

\newcommand\bT{\mathbb{T}}

\newcommand\cB{\mathcal{B}}

\newcommand\cF{\mathcal{F}}

\newcommand\cH{\mathcal{H}}

\newcommand\cP{\mathcal{P}}

\makeatletter
 \newcommand{\sumstar}%
 {\operatornamewithlimits{\sum@\kern-.2em\raise1ex\hbox{*}}}
 \makeatother

\begin{document}

\title[Degenerate stochastic PDEs]{
On the solvability of degenerate stochastic partial differential equations in Sobolev spaces}

\author[M. Gerencs\'er]{M\'at\'e Gerencs\'er}
\address{School of Mathematics and Maxwell Institute,
University of Edinburgh,
King's  Buildings,
Edinburgh, EH9 3JZ, United Kingdom}
\email{m.gerencser@sms.ed.ac.uk}

\author[I. Gy\"ongy]{Istv\'an Gy\"ongy}
\address{School of Mathematics and Maxwell Institute,
University of Edinburgh,
King's  Buildings,
Edinburgh, EH9 3JZ, United Kingdom}
\email{gyongy@maths.ed.ac.uk}

\author[N.V. Krylov]{Nicolai Krylov}%
\thanks{The work of the 
third author was partially supported
by NSF grant DMS-1160569}
\address{127 Vincent Hall, University of Minnesota,
Minneapolis,
       MN, 55455, USA}
\email{krylov@math.umn.edu}

\subjclass[2010] {60H15, 35K65, 35K45,  35F40}
\keywords{Cauchy problem, degenerate stochastic parabolic PDEs,  
first order symmetric hyperbolic system}

\begin{abstract} Systems of parabolic, possibly degenerate parabolic 
SPDEs are considered. Existence and uniqueness are established in Sobolev spaces. Similar results are obtained for a class of equations generalizing the deterministic first order symmetric 
hyperbolic systems. 
\end{abstract}

\maketitle
 \mysection{introduction}
 
 In this paper we are interested in the solvability in $L_p$ spaces 
of linear stochastic parabolic, possibly degenerate, 
 PDEs and of 
systems of linear stochastic parabolic PDEs. The equations we consider are 
important in applications. They arise, in nonlinear filtering 
of partially observable stochastic processes, in modelling of hydromagnetic dynamo 
evolving in fluids with random velocities, 
and in many other areas of physics and engineering. 

An $L_2$-theory of degenerate linear elliptic and parabolic PDEs is developed in 
\cite{Ol1}, \cite{Ol2}, \cite{OR1} and \cite{OR2}. The solvability in $L_2$ spaces of linear degenerate stochastic PDEs 
of parabolic type were first studied in \cite{KR1977} 
(see also \cite{Ro}). The first existence and uniqueness theorem
on solvability of these equations in $W^m_p$ spaces is presented in \cite{KR}. This result 
is improved in \cite{GK2003}. 

In the present paper we fill in a gap in the proof of the existence and uniqueness theorems 
in \cite{KR} and \cite{GK2003}. Moreover, we essentially improve these theorems  
(see Theorem \ref{theorem main}),  and 
our main result, Theorem \ref{theorem systemmain}, extends them to degenerate stochastic 
parabolic systems.  We present also an existence and uniqueness theorem, 
Theorem \ref{theorem antisymmetric},  
on solvability in $W^m_2$ spaces for a larger class of stochastic parabolic systems, 
which, in particular, contains the first order symmetric hyperbolic systems. 
This result was indicated in \cite{GK2006}. 

Finally we would like to mention that for some special degenerate stochastic PDEs, 
for example for the stochastic Euler equations, there are many results on solvability 
in the literature. (See, for example, \cite{BFM} and the references therein.)

In conclusion we introduce some notation used throughout the paper. 
All random elements will be given on a fixed
  probability space 
$(\Omega,\cF,P)$, equipped with a filtration 
$(\cF_t)_{t\geq0}$   of  $\sigma$-fields
$\cF_{t}\subset\cF$. We suppose that
this probability space carries  
a sequence of independent Wiener processes $(w^r)_{r=1}^{\infty}$,  
adapted to the filtration $(\cF_t)_{t\geq0}$, such that 
$w^r_t-w^r_s$ is independent of $\cF_s$ for each $r$ and any $0\leq s\leq t$.
It is assumed that $\cF_0$ contains all $P$-null subsets of $\Omega$, 
 so that
$(\Omega,\cF,P)$ is a complete probability space and the $\sigma$-fields
$\cF_{t}$ are complete. 
By $\mathcal P$ we denote the predictable
$\sigma$-field of subsets of $\Omega\times(0,\infty)$
generated by  $(\cF_t)_{t\geq0}$.

For $p\in[1,\infty)$, the space of measurable mappings 
$f$ from $\bR^d$ into a separable Hilbert space $\cH$, such that 
$$
\|f\|_{L_{p}}=
\big(\int_{\bR^d}|f(x)|_{\cH}^p\,dx\big)^{1/p}<\infty,
$$
 is denoted by 
$L_p(\bR^{d},\cH)$.

\begin{remark}
                                       \label{remark 4.11.1}
We did not include
the symbol $\cH$ in the notation of the   norm
in $L_{p}(\bR^{d},\cH)$. Which $\cH$ is involved will be absolutely clear 
from  the context. We  do the same in
other similar situations.
\end{remark} 

Often $\cH$ will be $l_2$, 
or the space of infinite matrices  $\{g^{ij}\in\bR:i=1,...,M,\, j=1,2,...\}$,  
or finite 
$M\times M$ matrices with the Hilbert-Schmidt norm.  
The space of functions from $L_p(\bR^{d},\cH)$, 
whose generalized derivatives 
up to order $m$ are also in $L_p(\bR^{d},\cH)$, is denoted by $W^m_p
(\bR^{d},\cH)$. 
By definition $W^0_p(\bR^{d},\cH)=L_p(\bR^{d},\cH)$. 
The norm $|u|_{W^m_p}$ of $u$ in $W^m_p(\bR^{d},\cH)$ 
is defined 
by  
\begin{equation}
                                          \label{4.11.1}
|u|^p_{W^m_p}=\sum_{|\alpha|\leq m}|D^{\alpha}u|^p_{L_p}, 
\end{equation}
where 
$D^{\alpha}:=D_1^{\alpha_1}...D_d^{\alpha_d}$ for 
{\it multi-indices} 
$\alpha:=(\alpha_1,...,\alpha_d)\in\{0,1,...\}^d$ of 
{\it length} $|\alpha|:=\alpha_1+\alpha_2+...+\alpha_d$,  
and $D_iu$ is the generalized derivative of $u$ with respect to 
$x^i$ 
for $i=1,2...,d$. We also use the notation $D_{ij}=D_iD_j$ and   
$Du=(D_1u,...,D_du)$. 
 Unless otherwise indicated, 
the summation convention with respect to repeated integer 
valued indices is used throughout the paper.  

\mysection{formulation}

                                            \label{section 1}
In this section $\cH=\bR$ and we use a shorter notation
$$
L_{p}=L_{p}(\bR^{d},\bR),\quad W^{m}_{p}=W^{m}_{p}(\bR^{d},\bR),
\quad W^{m+1}_p
 (l_2) =W^{m+1}_p
 (\bR^{d},l_2). 
$$

 Fix a $T\in(0,\infty)$ and  consider the problem 
\begin{equation}                                      \label{equation}
du_t(x)=(L_tu_t(x)+f_t(x))\,dt+(M^r_tu_t(x)+g^r_t(x))\,dw^r_t, 
\end{equation}
$(t,x)\in H_T:=[0,T]\times\mathbb R^d$,  with initial condition
\begin{equation}                                                                 \label{ini}
 u_0(x)=\psi(x),\quad x\in\mathbb R^d, 
\end{equation}
where 
$$
L_t=a^{ij}_t(x)D_{ij}+b^{i}_t(x)D_i+c_t(x), 
\quad M^r_t=\sigma_t^{ir}(x)D_i+\nu^r_t(x),
$$
and all functions, given
on $\Omega\times H_T$, are assumed to be real valued
and satisfy the following assumptions in which
$m\geq0$
 is an integer and $K$ is a constant.  

\begin{assumption}                     \label{assumption regularitycoeff}
The derivatives in $x\in\bR^d$ of $a^{ij}$ up to order $\max(m,2)$ and of $b^{i}$ and $c$ 
up to order $m$ are $\mathcal P\otimes\mathcal B(\bR^d)$-measurable 
functions, bounded by $K$ for all $i,j\in\{1,2,...,d\}$. 
The functions   $\sigma^{i}
=(\sigma^{ir})_{r=1}^{\infty}$ and $\nu=(\nu^{r})_{r=1}^{\infty}$ 
are $l_2$-valued and their derivatives in $x$ 
up to order $m+1$ are 
$\mathcal P\otimes\mathcal B(\bR^d)$-measurable 
$l_2$-valued functions, bounded by $K$.  
\end{assumption}

\begin{assumption}                       \label{assumption regularitydata}
The free data, $ f_t $ and $ g_t =(g^r)_{r=1}^{\infty}$ are 
predictable 
processes with values in $W^m_p$ and $W^{m+1}_p
 (l_2) $, respectively, 
such that almost surely
\begin{equation}															
\mathcal K_{m,p}^p(T)=\int_0^T
\big(|f_t|^p_{W^m_p}+|g_t|^p_{W^{m+1}_p }\big)\,dt<\infty.
\end{equation}
The initial value, $\psi$ is an $\cF_0$-measurable random variable with values in $W^m_p$. 
\end{assumption}

\begin{assumption}                                                         \label{assumption parabolicity}
For $P\otimes dt\otimes dx$-almost all $(\omega,t,x)\in\Omega\times[0,T]\times\bR^d$ 
$$
 \alpha^{ij}_t(x)z^iz^j\geq 0
$$
for all $z\in\bR^d$, where 
$$
\alpha^{ij}=2a^{ij}-\sigma^{ir}\sigma^{ jr}. 
$$
\end{assumption}

Let $\tau$ be a stopping time bounded by $T$. 
\begin{definition}                                                               \label{definition solution}
A $W_p^1$-valued function $u$,
defined on the stochastic interval $\opar0,\tau\cbrk$, is called a solution 
of 
\eqref{equation}-\eqref{ini} 
on $[0,\tau]$ if $u$ is predictable on $\opar0,\tau\cbrk$, 
$$
\int_0^{\tau}|u_t|_{W^1_p}^p<\infty\,(a.s.), 
$$
and for each $\varphi\in C_0^{\infty}(\bR^d)$ for almost all $\omega\in\Omega$
\begin{align*}
(u_t,\varphi)=&(\psi,\varphi)
+\int_0^t\{-(a^{ij}_sD_iu_s,D_j\varphi)
+(\bar b^{i}_sD_iu_s+c_su_s+f_s,\varphi)\}\,ds\\
&+\int_0^t(\sigma^{ir}_sD_iu_s+\nu^{r}_{s}u_s+g^r_s,\varphi)\,dw^r_{s}
\end{align*}
for all $t\in[0,\tau(\omega)]$,   where $\bar
b^i=b^{i}-D_ja^{ij}$, and $(\cdot,\cdot)$ denotes the inner
product in the Hilbert space of square integrable real-valued
functions on $\bR^d$.  
\end{definition}

We want to prove the following result. 
\begin{theorem}                                                                       \label{theorem main}
Let Assumptions 
\ref{assumption regularitycoeff}, 
\ref{assumption regularitydata},
and \ref{assumption parabolicity} hold and $m\geq1$. 
Then there exists a unique solution $u=(u_t)_{t\in[0,T]}$ on $[0,T]$.  
Moreover, $u$ is a  $W^{m}_p$-valued weakly continuous process,  
it is a strongly continuous 
process with values in $W^{m-1}_p$, and for every $q>0$ and $n\in\{0,1,...,m\}$
\begin{equation}                                                                      \label{estimate}
E\sup_{t\in[0,T]}|u_t|_{W^n_p}^q
\leq N(E|\psi|^q_{W^n_p}+E\mathcal K^q_n(T)), 
\end{equation}
where $N$ is a constant depending only on $K$, $T$, $d$, $m$, $p$ and $q$.  
\end{theorem}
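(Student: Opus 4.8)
The plan is to combine a vanishing-viscosity (parabolic regularization) argument with a priori estimates obtained by the classical energy/It\^o-formula method, applied not to $u$ itself but to its derivatives $D^\alpha u$. First I would establish the key a priori estimate \eqref{estimate} under the stronger assumption that the equation is \emph{uniformly nondegenerate}, i.e. $\alpha^{ij}_t(x)z^iz^j\geq\delta|z|^2$ for some $\delta>0$; in this case existence and uniqueness of a $W^{m+1}_p$-valued solution follows from the standard $L_p$-theory of nondegenerate stochastic parabolic equations (e.g.\ the results of Krylov's analytic approach), so the real content is the estimate uniform in $\delta$. To obtain it, I would differentiate \eqref{equation} formally $|\alpha|$ times, so that $v=D^\alpha u$ solves an equation of the same structure with modified lower-order coefficients and with a free term involving lower-order derivatives of $u$ and the data; then, writing the equation for $|v|^{p-2}v$ (equivalently applying It\^o's formula to $\int |D^\alpha u_t|^p\,dx$ after a suitable mollification to justify it), integrate by parts in the $a^{ij}D_{ij}$ term. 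The crucial algebraic point is that, after integration by parts, the second-order term from $L$ and the It\^o correction $\tfrac12\sigma^{ir}\sigma^{jr}D_{ij}$ combine so that the highest-order quadratic form that survives is governed precisely by $\alpha^{ij}=2a^{ij}-\sigma^{ir}\sigma^{jr}\geq0$; the genuinely dangerous terms — those of the form $\int \sigma^{ir}D_i(D^\alpha u)\cdot\sigma^{jr}D_j(D^\alpha u)|D^\alpha u|^{p-2}$ coming from the square of the stochastic integral — are exactly cancelled, up to lower-order commutator terms, by part of the deterministic second-order term. This is the step where the ``gap'' in \cite{KR} and \cite{GK2003} presumably lies, and handling the commutators $[D^\alpha,\sigma^{ir}D_i]$ and $[D^\alpha,a^{ij}D_{ij}]$ carefully — bounding them in terms of $\|u\|_{W^m_p}$ using Assumption \ref{assumption regularitycoeff} and the product/interpolation inequalities in Sobolev spaces — is where I expect the main technical effort to go.

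Granting the energy identity, I would then bound the resulting drift terms by $N|u_t|^p_{W^m_p}+N(|f_t|^p_{W^m_p}+|g_t|^p_{W^{m+1}_p})$, estimate the martingale part by the Burkholder--Davis--Gundy inequality (the bracket of $\int(\dots)\,dw^r$ is controlled by $\int_0^t|u_s|^{2p}_{W^m_p}\,ds$ plus data, which after Young's inequality and a stopping-time localization feeds back into the left-hand side), and close the loop with Gr\"onwall's lemma to get \eqref{estimate} for $q=p$; the case of general $q>0$ follows by a further application of BDG together with the already-proven bound, in the usual way. I would first prove all of this for smooth, compactly supported data and coefficients (so that all formal manipulations are legitimate), and then pass to the general case by mollification, using \eqref{estimate} itself to show the approximating solutions form a Cauchy sequence in the appropriate space.

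With the uniform-in-$\delta$ estimate in hand, the degenerate case is treated by setting $a^{ij}_\delta=a^{ij}+\delta\,\delta^{ij}$ (so that $\alpha^{ij}_\delta=\alpha^{ij}+2\delta\,\delta^{ij}$ is uniformly positive and Assumption \ref{assumption parabolicity} is strengthened to uniform parabolicity while Assumptions \ref{assumption regularitycoeff}, \ref{assumption regularitydata} persist with a $\delta$-independent constant), solving the nondegenerate problems to get $u^\delta$, and noting that \eqref{estimate} holds for $u^\delta$ with $N$ independent of $\delta$. Applying the same energy estimate to the difference $u^\delta-u^{\delta'}$, which solves an equation whose free term is $O(|\delta-\delta'|)\cdot D_{ij}u^{\delta'}$ — a term we do \emph{not} control uniformly, so here I would instead estimate the difference in the weaker norm $W^{m-1}_p$, where the $\delta$-perturbation is harmless — shows $(u^\delta)$ is Cauchy in $C([0,T];W^{m-1}_p)$ a.s.\ and in $L^q(\Omega;C([0,T];W^{m-1}_p))$. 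The limit $u$ is the desired solution: it inherits \eqref{estimate} for all $n\le m$ by lower semicontinuity of the norms under weak convergence, it is strongly continuous in $W^{m-1}_p$ by construction, and weak continuity in $W^m_p$ follows from the uniform bound $\sup_t|u_t|_{W^m_p}<\infty$ together with strong $W^{m-1}_p$-continuity (a standard functional-analytic lemma: a bounded path in the reflexive space $W^m_p$ that is continuous in a weaker topology is weakly continuous). Uniqueness for the degenerate equation follows directly from the $W^{m-1}_p$ (in fact $L_p$, taking $m=1$) energy estimate applied to the difference of two solutions, which gives that it vanishes. The main obstacle, to reiterate, is the correct treatment of the top-order energy estimate for $D^\alpha u$ in the degenerate setting — getting the cancellation between the $\sigma\sigma$ terms and the second-order drift right, and controlling all commutator remainders purely in terms of $|u|_{W^m_p}$ and the data.
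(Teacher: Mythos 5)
There is a genuine gap, and it sits exactly where you locate the ``main technical effort'': your plan to apply It\^o's formula to $\int|D^\alpha u_t|^p\,dx$ for each multi-index $\alpha$ separately and to bound the commutators $[D^\alpha,a^{ij}D_{ij}]u$ ``in terms of $\|u\|_{W^m_p}$ using product/interpolation inequalities'' cannot work in the degenerate setting, and reproducing it would reproduce the very gap in \cite{KR} and \cite{GK2003} that this paper is written to fill. The dangerous terms are not the ones you name (the $\sigma^{ir}\sigma^{jr}$ It\^o correction against $2a^{ij}$ is the standard, unproblematic cancellation producing $\alpha^{ij}\geq0$); they are the commutator terms of the form $(D_ka^{ij})D_{ij}D^{\alpha-e_k}u$, which contain derivatives of $u$ of order $m+1$ and hence are \emph{not} controlled by $|u|_{W^m_p}$, nor by any interpolation, since in the degenerate case there is no coercive term $\delta|u|^2_{W^{m+1}_p}$ available uniformly in the viscosity parameter. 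The only way to absorb them is through the degenerate form itself, via the Ole\u\i nik-type inequality $|Da^{ij}V^{ij}|^2\leq N\,a^{ij}V^{ik}V^{jk}$ (Lemma \ref{lemma derivative}). But, as the paper shows explicitly in the Preliminaries (see the discussion around \eqref{Qproblem}), after this absorption the componentwise weight $|D_ku|^{p-2}$ produces the term $\varepsilon|v_k|^{p-2}a^{ij}v_{il}v_{jl}$ summed over $l$, while the good term $-p(p-1)|v_k|^{p-2}a^{ij}v_{ki}v_{kj}$ only controls the $l=k$ contribution; for $p>2$ and $d>1$ the cross terms cannot be cancelled. So the estimate you take for granted simply fails at top order when you work with $\sum_\alpha|D^\alpha u|^p_{L_p}$.

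The paper's way out is structural, not a refinement of the commutator bounds: one must estimate $\bigl||Du|^2\bigr|_{L_{p/2}}^{p/2}$ rather than $\sum_k|D_ku|^p_{L_p}$, i.e.\ treat the full gradient $Du$ (and, inductively, all derivatives of a given order) as a single $\bR^{M}$-valued unknown solving a \emph{system}, and apply the vector-valued It\^o formula for $\int\langle u\rangle^p\,dx$ (Lemma \ref{lemma itoformula}). With the weight $\langle Du\rangle^{p-2}$ in place of $|D_ku|^{p-2}$ the absorbed terms match the good quadratic form, which is why Theorem \ref{theorem main} is deduced from the system result, Theorem \ref{theorem systemmain}, together with Remark \ref{remark 1.20.3.14} (for $M=1$ the structural Assumption \ref{assumption systemparabolicity} reduces to $\alpha\geq0$) and the induction of Remark \ref{remark 12.19.1}, which shows that the differentiated system again satisfies the same structural hypothesis. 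Your outer scheme (mollified, uniformly parabolic approximations; uniform estimates; passage to the limit; uniqueness from the $L_p$ estimate; weak $W^m_p$-continuity from strong $W^{m-1}_p$-continuity plus boundedness) is in the same spirit as the paper's, though two further points are glossed over: the approximation must be built so that the degeneracy structure (here, Assumption \ref{assumption systemparabolicity} with the same $K_0$; in your setting, $\alpha\geq0$ with controlled constants) survives mollification plus the added viscosity, and your claim that $u^\delta-u^{\delta'}$ is Cauchy in $W^{m-1}_p$ needs justification, since the perturbation term $|\delta-\delta'|\,D_{ij}u^{\delta'}$ is only controlled by nonuniform $W^{m+1}$-bounds of the regularized solutions; the paper avoids this by extracting weak limits in the reflexive spaces $\bH^n_{p,r,q}$ and recovering a continuous modification via the It\^o formula of \cite{KrR1981}.
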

This result is proved in \cite{KR} in the case $q=p\geq2$ 
under the additional assumptions 
that $E\mathcal K_{m,p'}(T)<\infty$ and $E|\psi|^{p'}_{W^{m}_{p'}}<\infty$ for 
$p'=p$ and $p'=2$ (see Theorem 3.1 there). These additional assumptions are not 
supposed in \cite{GK2003} and a somewhat weaker version of the above  theorem 
is obtained in \cite{GK2003} when $q\in(0,p]$.  
The proof of it in \cite{GK2003} uses  Theorem 3.1 from \cite{KR}, 
whose proof is based on an estimate for the derivatives of the solution $u$, 
formulated as Lemma 2.1 in \cite{KR}.   
The proof of this lemma, however, contains 
a gap. Our aim is to fill in this gap and also to improve the existence and uniqueness theorems from 
\cite{KR} and \cite{GK2003}. Since $Du=(D_1u,...,D_du)$ satisfies a system of SPDEs, 
it is natural to present and prove our results in the context of systems of stochastic PDEs. 

\mysection{Systems of stochastic PDEs}

Let $M\geq1$ be an integer, and let $\langle\cdot,
\cdot\rangle$ and $\langle\cdot 
 \rangle$ denote the scalar product and the norm
in $\bR^{M}$, respectively.
By $\bT^{M}$ we denote the set
of $M\times M$ matrices, which we consider
as a Euclidean space $\bR^{M^{2}}$. 
For an integer $m\geq 1$ we define $l_{2}(\bR^{m})$
as the space of sequences
  $\nu=(\nu^{1},\nu^{2},...)$
with $\nu^{k}\in\bR^{m}$, $k\geq1$, and finite norm
$$
\|\nu\|_{l_{2} }=\big(
\sum_{k=1}^{\infty}|\nu|^{2}_{\bR^{m}}\big)^{1/2}
$$
(cf. Remark \ref{remark 4.11.1}).

We look for $\bR^{M}$-valued functions $u_t(x)=(u^1_t(x),...,u^M_t(x))$, 
of $\omega\in\Omega$, $t\in[0,T]$ and $x\in\bR^d$, which 
satisfy  the system of equations 
\begin{align}                        
du_{t}=&
[ a^{ij}_{t}D_{ij}u_{t}+b^{i}_{t}D_{i} u_{t} 
 +cu_t+f_{t}]\,dt\nonumber\\
 &+
 [\sigma^{ik}_{t}D_{i}u_{t}+\nu^{k}_{t}u_{t}
+g^{k}_{t}]\,dw^{k}_{t},             \label{system}
\end{align}
and the initial condition 
\begin{equation}
u_0=\psi,                                    \label{systemini}
\end{equation}
where
$
a_{t}=(a^{ij}_{t}(x))
$ 
takes values in the set of
 $d\times d$ symmetric matrices, 
$$ 
\sigma^{i}_{t}=(\sigma^{ik}_{t}(x),k\geq1)\in l_{2} ,
\quad b^{i}_{t}(x)\in\bT^{M}, 
\quad
c_t(x)\in\bT^{M}, 
$$ 
\begin{equation}
                                                      \label{4.12.1}
\nu_{t}(x)\in l_{2}(\bT^{M}),
\quad f_t(x)\in\bR^M, \quad   
g_t(x)\in l_{2}(\bR^{M})
\end{equation}
for $i=1,...,d$, for all $\omega\in\Omega$, $t\geq0$, $x\in\bR^d$.

Let $m$ be a nonnegative integer, $p\in[2,\infty)$ and make the following assumptions, which are 
straightforward adaptations of Assumptions 
\ref{assumption regularitycoeff} 
and \ref{assumption regularitydata}.  

\begin{assumption}                           \label{assumption systemcoeff}
The derivatives in $x\in\bR^d$ of $a^{ij}$ up to order $\max(m,2)$ 
(including the zeroth-order derivative) and 
of $b^{i}$ and $c$ 
up to order $m$ are $\mathcal P\otimes\mathcal B(\bR^d)$-measurable 
functions, in magnitude bounded by $K$ for all $i,j\in\{1,2,...,d\}$. The derivatives in $x$ of 
the $l_2 $-valued functions $\sigma^{i}=(\sigma^{ik})_{k=1}^{\infty}$ 
and the $l_{2}(\bT^{M})$-valued 
function $\nu$ up to order $m+1$ are 
$\mathcal P\otimes\mathcal B(\bR^d)$-measurable
 $l_2$-valued and 
$l_{2}(\bT^{M})$-valued functions, respectively, in magnitude bounded by $K$.  
\end{assumption}
\begin{assumption}                      \label{assumption systemdata} 
The free data, $(f_t)_{t\in[0,T]}$ and $(g_t)_{t\in[0,T]}$ are 
predictable 
processes with values in 
$$
W^m_p(\bR^{d},\bR^M)\quad\text{and}\quad 
 W^{m+1}_p(\bR^{d},l_{2}(\bR^{M}) ),
$$
 respectively, 
such that almost surely
\begin{equation}													\label{02.25}
\mathcal K_{m,p}^p(T)=\int_0^T
\big(|f_t|^p_{W^m_p}+|g_t|^p_{W^{m+1}_p}\big)\,dt<\infty.
\end{equation}
The initial value, $\psi$ is an $\cF_0$-measurable random variable with 
values in $ 
W^m_p(\bR^{d},\bR^M)$. 
\end{assumption}
Set 
$$
\beta^i=b^i-\sigma^{ir}\nu^r, \quad i=1,2,...,d. 
$$ 
Instead of Assumption \ref{assumption parabolicity} we impose now 
the following condition.

\begin{assumption}                 \label{assumption systemparabolicity}
There exist a constant 
$K_0>0$ and a $\cP\times\mathcal B(\bR^d)$-measurable 
$\bR^M$-valued bounded function $h=(h^{i}_t(x))$, whose first order 
derivatives in $x$ are bounded functions, such that 
for all  $\omega\in\Omega$, $t\geq0$ and $x\in\bR^d$ 
\begin{equation}                                                                          \label{h}
|h|+|Dh|\leq K, 
\end{equation}
and for all $(\lambda_1,...,\lambda_d)\in\bR^d$  
\begin{equation}                                                                           
                                               \label{semidefinite 1}                        
|\sum_{i=1}^d(\beta^{ikl}-\delta^{kl}h^i)\lambda_i|^2
\leq K_0\sum_{i,j=1}^d\alpha^{ij}\lambda_i\lambda_j\quad\text{for
$k,l=1,...,M$.}
\end{equation}
\end{assumption}

\begin{remark}                        \label{remark reformulation} 
Notice that condition  \eqref{semidefinite 1} in 
Assumption \ref{assumption systemparabolicity} 
can be reformulated as follows: 
There exists a constant $K_0$ such that 
for all values of the arguments and
all continuously differentiable $\bR^{M}$-valued
functions $u=u(x)$ on $\bR^{d}$ we have
\begin{equation}
                                            \label{12.19.1}
 \langle  u, b^{i}D_{i}u\rangle
-\sigma^{ik}\langle u,\nu^{k}D_{i}u\rangle
\leq K_{0}\big|\sum_{i,j=1}^{d}\alpha^{ij}\langle
D_{i}u,D_{j}u\rangle
\big|^{1/2}\langle u\rangle +h^{i} 
\langle D_{i} u,u \rangle.
\end{equation}

Indeed, set $\hat \beta^{i}=\beta^{i}-h^{i}I_{M}$,
where $I_{M}$ is the $M\times M$ unit matrix and observe that,
  \eqref{12.19.1} means that
$$
\langle u, \hat \beta^{i}D_{i}u\rangle
\leq K_{0}\big|\sum_{i,j=1}^{d}\alpha^{ij}\langle
D_{i}u,D_{j}u\rangle
\big|^{1/2}\langle u\rangle.
$$
By considerng this relation at a fixed point $x$ and noting that
then one can choose $u$ and $Du$ independently, we conclude that
\begin{equation}
                                            \label{4.12.3}
\langle  \sum_{i}\hat \beta^{i}D_{i}u\rangle^{2}\leq K_{0}^{2}
 \alpha^{ij}\langle
D_{i}u,D_{j}u\rangle
\end{equation}
and \eqref{semidefinite 1} follows (with a different $K_{0}$)
if we take $D_{i}u^{k}=\lambda_{i}\delta^{kl}$.

On the other hand, \eqref{semidefinite 1}
means that for any $l$ without summation on $l$
$$
\big|\sum_{i}\hat \beta^{ikl}D_{i}u ^{l} \big|^{2}
\leq K_{0}\alpha^{ij} 
(D_{i}u ^{l} ) D_{j}u ^{l} .
$$
  But then by Cauchy's
inequality similar estimate holds after summation on $l$
is done and carried inside the square on the left-hand side.
This yields \eqref{4.12.3} (with a different constant
$K_{0}$) and then leads to \eqref{12.19.1}.

\end{remark}

\begin{remark}                                                   \label{remark 1.20.3.14}
Notice that, given Assumption \ref{assumption systemcoeff} we can just take 
 $h^i=\beta^i$ for $i=1,...,d$  if
$M=1$ to see that in that case
 Assumption \ref{assumption systemparabolicity} 
 is equivalent to $\alpha\geq0$. 
If Assumption \ref{assumption systemcoeff} holds, 
then  for every $k,l=1,2,...,M$ 
$$
|\sum_{i=1}^d(\beta^{ikl}-\delta^{kl}h^i)\lambda_i|^2
\leq N\sum_{i=1}^d|\lambda_i|^2
$$
with a constant $N=N(K,d)$. Hence if in addition 
to Assumption \ref{assumption systemcoeff} we have 
$\alpha\geq \kappa I_d$ with a constant $\kappa>0$ for all $\omega$, $t\geq0$ 
and $x\in\bR^d$, 
then Assumption \ref{assumption systemparabolicity} holds for any $M\geq1$. 
\end{remark}

The notion of solution to \eqref{system}-\eqref{systemini} is 
a straightforward adaptation of Definition \ref{definition solution} 
to systems of equations. Namely, $u=(u^1,...,u^M)$ is a solution 
on $[0,\tau]$, for a stopping time $\tau\leq T$,  if it is a $W^1_p
(\bR^{d},\bR^{M})$-valued predictable function 
on  $\opar0,\tau\cbrk$, 
$$
\int_{0}^{\tau}|u_t|_{W^1_p}^p\,dt<\infty  \quad \text{(a.s.)}, 
$$
and for each $\bR^M$-valued $\varphi=(\varphi^1,...,\varphi^M)$ 
from $C_0(\bR^d)$ with probability one 
\begin{align}
(u_t,\varphi)=&(\psi,\varphi)
+\int_0^t\{-(a^{ij}_sD_iu_s,D_j\varphi)                                                \nonumber\\
&+(\bar b^{i}_sD_iu_s+c_su_s+f_s,\varphi)\}\,ds\\
&+\int_0^t(\sigma^{ir}_sD_iu_s+\nu^{r}_su_s+g^{r}(s),\varphi)\,dw^r_{s}     \label{systemsolution}
\end{align}
for all $t\in[0, \tau ]$, where 
$\bar b^{i}=b^{i}-D_ja^{ij}I_{M}$. Here, and later on $(\Psi,\Phi)$ denotes the 
inner product in the $L_2$-space of $\bR^M$-valued functions $\Psi$ and $\Phi$ 
defined on $\bR^d$. 

The main result of the paper reads now just like Theorem 
\ref{theorem main} above. 

\begin{theorem}                          \label{theorem systemmain}
 Let Assumptions 
 \ref{assumption systemcoeff}, 
 \ref{assumption systemdata} and 
 \ref{assumption systemparabolicity} hold with $m\geq1$. 
Then there is a unique solution $u=(u^l)_{l=1}^M$ 
to \eqref{system}-\eqref{systemini} on $[0,T]$.  Moreover, $u$ is a weakly continuous 
$W^m_p(\bR^{d},\bR^{M})$-valued process, it is strongly continuous as a $W^{m-1}_p(\bR^{d},\bR^{M})$-valued  
process, and for every $q>0$ and $n\in\{0,1,...,m\}$
\begin{equation}                 \label{estimate of main theorem}
E\sup_{t\in[0,T]}|u_t|_{W^n_p}^q
\leq N(E|\psi|^q_{W^n_p}+E\mathcal K^q_{n,p}(T)) 
\end{equation}
with $N=N(m,p,q,d, M,K,T)$. 
\end{theorem}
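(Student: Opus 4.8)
My strategy would be the now-standard three-step route for degenerate linear SPDEs: (1) prove an a priori estimate of the form \eqref{estimate of main theorem} for sufficiently smooth solutions of a \emph{nondegenerate} approximation; (2) pass to the limit in the nondegeneracy parameter to get solvability with the estimate in the degenerate case; (3) bootstrap to the claimed regularity and continuity. Concretely, for $\varepsilon>0$ replace $a^{ij}$ by $a^{ij}+\varepsilon\delta^{ij}$, so that $\alpha$ becomes $\alpha+2\varepsilon I_d>0$ and Assumption \ref{assumption systemparabolicity} holds with the same $h$. The nondegenerate problem is a classical parabolic SPDE system whose solvability in $W^{m}_p(\bR^d,\bR^M)$ (with $p\ge2$) is known; call its solution $u^\varepsilon$. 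Everything then rests on proving that $u^\varepsilon$ satisfies \eqref{estimate of main theorem} with a constant $N$ \emph{independent of $\varepsilon$}.

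\textbf{The a priori estimate.} This is the heart of the matter and the step that contains the gap referred to in the introduction. Fix $n\le m$ and a multi-index $\alpha$ with $|\alpha|\le n$; applying $D^\alpha$ to \eqref{system} shows that $v:=D^\alpha u^\varepsilon$ solves a system of the same type, with lower-order terms involving derivatives of the coefficients (bounded by $K$ by Assumption \ref{assumption systemcoeff}) and free terms built from $D^\beta f$, $D^\beta g$ with $|\beta|\le n$. For a single such equation, apply the Itô formula for $|v_t|^p_{L_p}=\int_{\bR^d}\langle v_t(x)\rangle^p\,dx$ (i.e. to the function $\langle\cdot\rangle^p$ composed with $v$), then integrate by parts. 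The second-order term $a^{ij}D_{ij}v$ produces, after integration by parts, a term $-\int p\langle v\rangle^{p-2} a^{ij}\langle D_i v,D_j v\rangle\,dx$ plus a term with $D_j a^{ij}$; combined with the quadratic variation coming from $\sigma^{ik}D_i v$ — which, by the Itô correction, contributes $+\tfrac12 p\langle v\rangle^{p-2}\sigma^{ik}\sigma^{jk}\langle D_i v,D_j v\rangle$ plus terms with lower-order $D v$, $v$ factors — the leading second-derivative contribution is exactly $-\tfrac{p}{2}\int\langle v\rangle^{p-2}\alpha^{ij}\langle D_i v,D_j v\rangle\,dx\le0$. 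The \emph{remaining} first-order-in-$Dv$ terms are of the form $\langle v, b^iD_iv\rangle-\sigma^{ik}\langle v,\nu^k D_iv\rangle$ against $\langle v\rangle^{p-2}$, and this is precisely where Remark \ref{remark reformulation} is used: \eqref{12.19.1} bounds this by $K_0|\alpha^{ij}\langle D_iv,D_jv\rangle|^{1/2}\langle v\rangle+h^i\langle D_iv,v\rangle$. The first piece absorbs into the nonpositive $\alpha$-term via Young's inequality (this is the crucial cancellation that makes degeneracy harmless), and the $h^i\langle D_iv,v\rangle\langle v\rangle^{p-2}=\tfrac1p h^i D_i(\langle v\rangle^p)$ piece is handled by one more integration by parts, producing $-\tfrac1p\int(D_ih^i)\langle v\rangle^p\,dx$, controlled by $\tfrac{K}{p}|v_t|^p_{L_p}$. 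All other terms (involving $c$, $\nu$, $f$, $g$, and lower-order derivatives coming from the commutators $[D^\alpha,\text{coeff}]$) are estimated crudely by $N(|v_t|^p_{L_p}+\sum_{|\beta|\le n}|D^\beta u^\varepsilon_t|^p_{L_p}+\text{free terms})$. Summing over $|\alpha|\le n$ and using Gronwall's lemma gives $E\sup_t|u^\varepsilon_t|^q_{W^n_p}\le N(E|\psi|^q_{W^n_p}+E\mathcal K^q_{n,p}(T))$ for $q=p$; the $\sup$ inside the expectation and the passage to general $q>0$ require the Burkholder–Davis–Gundy inequality applied to the martingale part $\int_0^t(\dots)\,dw^k_s$ (for the stochastic term one needs to bound the bracket $\int_0^t\big(\sum_k|(\langle v_s\rangle^{p-2}v_s,\sigma^{ik}D_iv_s+\dots)|^2\big)\,ds$, which after Cauchy–Schwarz in $x$ and using $\sum_k|\sigma^{ik}|^2\le K$ is $\le N\int_0^t|v_s|^{2p}_{L_p}\,ds$ plus controllable terms, yielding $\le \tfrac12 E\sup|v|^p_{L_p}+\dots$). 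The extension from $q=p$ to arbitrary $q>0$ is then the standard argument of Krylov–Rozovskii via a stopping-time/interpolation device (or one simply redoes the Itô computation with the function $(\epsilon+|v_t|^p_{L_p})^{q/p}$).

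\textbf{Passage to the limit and regularity.} With the uniform bound in hand, $\{u^\varepsilon\}$ is bounded in $L_q(\Omega;L_\infty([0,T];W^m_p))$ and, being Cauchy (apply the same a priori estimate to the difference $u^\varepsilon-u^{\varepsilon'}$, whose equation has free term $(\varepsilon-\varepsilon')D_{ij}u^{\varepsilon'}$, bounded in $W^{m-1}_p$ — here $m\ge1$ is used — so that $u^\varepsilon-u^{\varepsilon'}\to0$ in $L_q(\Omega;L_\infty([0,T];W^{m-1}_p))$), it converges strongly in $W^{m-1}_p$; the limit $u$ is a solution of \eqref{system}-\eqref{systemini} in the sense of the adapted Definition \ref{definition solution}, satisfies \eqref{estimate of main theorem} by lower semicontinuity of the norm under weak limits, and strong continuity in $W^{m-1}_p$ follows from the standard fact that an Itô process with $L_p$-valued drift and $W^{1}_p(l_2)$-valued diffusion coefficient is $W^{m-1}_p$-continuous. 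Weak continuity in $W^m_p$ then follows from the strong $W^{m-1}_p$-continuity together with the uniform bound $\sup_t|u_t|_{W^m_p}<\infty$ a.s. Uniqueness: the difference of two solutions solves the homogeneous system ($f=g=\psi=0$), and the a priori estimate at level $n=0$ (which only needs $m\ge1$ for the solution to have enough regularity to justify the Itô formula — in fact one may need to mollify and use that a solution lies in $W^1_p$ by definition) forces it to vanish.

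\textbf{Main obstacle.} The delicate point — and the one the introduction flags as the gap in \cite{KR} — is the a priori estimate for the \emph{derivatives} $D^\alpha u$: after differentiating, one must track that the "bad" first-order term retains exactly the structure $\langle v,\hat\beta^iD_iv\rangle$ that Assumption \ref{assumption systemparabolicity} controls, and that the commutator terms $[D^\alpha,a^{ij}D_{ij}]u$ and $[D^\alpha,\sigma^{ik}D_i]u$ — which a priori contain a derivative of order $n+1$ of $u$ multiplied by a derivative of the coefficient — can be reorganized (integration by parts, and using that $D^\alpha u$ for the \emph{same} $\alpha$ appears on the left) so that no genuine $(n+1)$st derivative of $u$ survives unabsorbed into the nonpositive $\alpha$-form. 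Getting this bookkeeping right, uniformly in $\varepsilon$ and without assuming the extra integrability of \cite{KR}, is the technical core of the proof.
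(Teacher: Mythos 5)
Your overall skeleton (elliptic regularization, an $\varepsilon$-uniform a priori estimate, weak/strong passage to the limit) matches the paper's, but the way you propose to prove the core a priori estimate is precisely the argument that this paper was written to repair, and it does not work. You fix a multi-index $\alpha$, apply It\^o's formula to $|D^\alpha u^\varepsilon_t|^p_{L_p}$ for each $\alpha$ separately, and then sum over $|\alpha|\le n$, asserting in your last paragraph that the commutator terms (a derivative of a coefficient times a derivative of $u$ of order $n+1$) ``can be reorganized'' so that nothing survives unabsorbed. Section 4 of the paper shows concretely why this bookkeeping fails when $p>2$ and $d>1$: for the model equation $du=a^{ij}D_{ij}u\,dt$ the term $p|v_k|^{p-2}v_k a^{ij}_k v_{ij}$ can only be controlled, via the Oleinik-type Lemma \ref{lemma derivative}, by $\varepsilon|v_k|^{p-2}a^{ij}v_{il}v_{jl}$ with a summation over $l$, whereas the good term produced by the equation for $v_k=D_k u$ is $-p(p-1)|v_k|^{p-2}a^{ij}v_{ki}v_{kj}$, which carries the weight $|v_k|^{p-2}$ and only the index $l=k$; the cross terms $l\neq k$ cannot be absorbed, and summing over $k$ does not help because the weights $|v_k|^{p-2}$ and $|v_l|^{p-2}$ do not match. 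This is exactly the gap in Lemma 2.1 of \cite{KR} that the introduction refers to, and your proposal reproduces it rather than filling it.

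The missing idea is to abandon the quantity $\sum_{|\alpha|\le n}|D^\alpha u|^p_{L_p}$ and instead work with $\int_{\bR^d}\langle Du\rangle^p\,dx=\bigl\||Du|^2\bigr\|_{L_{p/2}}^{p/2}$: treat the whole gradient $v=Du$ as a single $\bR^{Md}$-valued solution of the differentiated system, check (Remark \ref{remark 12.19.1}, which uses Lemma \ref{lemma derivative}) that this induced system again satisfies the structural Assumption \ref{assumption systemparabolicity} with $m-1$ in place of $m$, and then apply the vector-valued $L_p$ It\^o formula (Lemma \ref{lemma itoformula}) and the one-step estimate (Lemma \ref{lemma 12.17.1}) to $v$, iterating by induction on $m$. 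Then the weight $\langle v\rangle^{p-2}$ is common to all components and the cross terms are absorbed by the full nonpositive form $\langle v\rangle^{p-2}\alpha^{ij}\langle\langle D_iv,D_jv\rangle\rangle$ — this is also why the paper is set up for systems from the start. Two secondary points: (i) your Cauchy-in-$\varepsilon$ argument relies on the free term $(\varepsilon-\varepsilon')D_{ij}u^{\varepsilon'}$ being bounded in $W^{m-1}_p$, which would require a bound on $u^{\varepsilon'}$ in $W^{m+1}_p$ uniform in $\varepsilon'$ that you do not have (the paper instead uses weak compactness in the reflexive spaces $\bH^n_{p,r,q}$ and then upgrades continuity via the It\^o formula of \cite{KrR1981} and Lemma \ref{lemma itoformula}); (ii) merely adding $\varepsilon\delta^{ij}$ to $a^{ij}$ is not enough — the coefficients and data must also be mollified (keeping Assumption \ref{assumption systemparabolicity} with a uniform $K_0$, as in Lemma \ref{lemma smoothing}) so that the approximating solutions lie in $W^{m+1}_p$, which is needed to run the estimate of Lemma \ref{lemma estimate} at all.
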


In the case $p=2$ we present also a modification of 
Assumption \ref{assumption systemparabolicity}, 
in order to cover   
an important class of stochastic PDE systems, 
the {\it hyperbolic symmetric systems}.

Observe that if in \eqref{semidefinite 1} we replace
$\beta^{ikl}$ with $\beta^{ilk}$, nothing will change.
By the convexity of $t^{2}$ condition \eqref{semidefinite 1}
then holds if we repace $\beta^{ilk}$ with $(1/2)[
 \beta^{ilk}+\beta^{ikl}]$.
Since
$$
|a-b|^{2}\leq |a+b|^{2}+2a^{2}+2b^{2}
$$
this implies that
\eqref{semidefinite 1} also holds for 
$$
\bar{\beta}^{ikl}=(\beta^{ikl}-\beta^{ilk})/2
$$
in place of $\beta^{ikl}$,
which is the antisymmetric part of $\beta^i=b^i-\sigma^{ir}\nu^{r}$.

Hence the following condition is weaker than 
Assumption \ref{assumption systemparabolicity}.

\begin{assumption}                \label{assumption antisymmetric}
There exist a constant 
$K_0>0$ and a $\cP\times\mathcal B(\bR^d)$-measurable 
$\bR^M$-valued function $h=(h^{i}_t(x))$ such that \eqref{h} 
holds, and 
for all  $\omega\in\Omega$, $t\geq0$ and $x\in\bR^d$ 
and for all $(\lambda_1,...,\lambda_d)\in\bR^d$  
\begin{equation}                                                                            \label{semidefinite 2}                        
|\sum_{i=1}^d({\bar\beta}^{ikl}-\delta^{kl}h^i)\lambda_i|^2
\leq K_0\sum_{i,j=1}^d\alpha^{ij}\lambda_i\lambda_j\quad\text{for $k,l=1,...,M$.}
\end{equation}
\end{assumption}

The following result in the special case of deterministic 
PDE systems is indicated and a proof is sketched in 
\cite{GK2006}. 

\begin{theorem}                                                                                \label{theorem antisymmetric}
 Let Assumptions 
 \ref{assumption systemcoeff},  
 \ref{assumption systemdata} 
 and 
 \ref{assumption antisymmetric} 
 hold with $m\geq1$ and with $p=2$. 
Then the conclusion of Theorem \ref{theorem systemmain} holds with 
$p=2$. 
\end{theorem}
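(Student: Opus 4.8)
\textbf{Proof plan for Theorem \ref{theorem antisymmetric}.}
The plan is to reduce Theorem \ref{theorem antisymmetric} to Theorem \ref{theorem systemmain} by a device that trades the antisymmetry of $\bar\beta^i$ against the price of introducing an extra pair of components. Concretely, I would double the number of unknowns: given a solution candidate $u$ of \eqref{system}--\eqref{systemini}, consider the enlarged $\bR^{2M}$-valued vector $v=(u,\tilde u)$ and construct an auxiliary system for $v$ whose drift matrices $\hat b^i$ have symmetric part equal to $(\beta^{ikl}+\beta^{ilk})/2$ placed in off-diagonal $M\times M$ blocks in such a way that, when restricted to the diagonal $\tilde u=u$, the symmetric part cancels and only $\bar\beta^i$ survives. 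Since \eqref{semidefinite 1} is satisfied by $\bar\beta^i$ (this was the content of the paragraph preceding Assumption \ref{assumption antisymmetric}), the enlarged system will satisfy Assumption \ref{assumption systemparabolicity} with $M$ replaced by $2M$, so Theorem \ref{theorem systemmain} applies to it and yields existence, uniqueness, weak continuity in $W^m_2$, strong continuity in $W^{m-1}_2$, and the a priori bound \eqref{estimate of main theorem}. One then checks that the diagonal $\{\tilde u=u\}$ is invariant for the auxiliary flow (because the auxiliary coefficients agree with the original ones there), so the restriction solves the original system, and conversely any solution of the original system lifts to the diagonal; this transfers all conclusions back to \eqref{system}.

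An alternative, and probably cleaner, route is to revisit the energy estimate that underlies Theorem \ref{theorem systemmain} and observe that only the weaker Assumption \ref{assumption antisymmetric} is ever used when $p=2$. Indeed, for $p=2$ the key a priori estimate comes from applying It\^o's formula to $|D^\alpha u_t|_{L_2}^2$ for $|\alpha|\le m$ and controlling the resulting drift; the only place where the full strength of \eqref{semidefinite 1} (as opposed to its antisymmetric-part version \eqref{semidefinite 2}) could matter is in bounding the first-order term $\langle u, b^i D_i u\rangle-\sigma^{ik}\langle u,\nu^k D_i u\rangle$. But in the $L_2$ inner product one integrates by parts: $\int_{\bR^d}\langle u, \beta^i D_i u\rangle\,dx$ only sees the antisymmetric part of $\beta^i$ up to a term involving $D_i\beta^i$, which is lower order and absorbed using Assumption \ref{assumption systemcoeff}. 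So the reformulation \eqref{12.19.1} is needed only with $\bar\beta^i$ in place of $\beta^i$, and Assumption \ref{assumption antisymmetric} suffices to run the identical argument. Under this route the proof is: repeat the proof of Theorem \ref{theorem systemmain} verbatim, pointing out at the single step above that integration by parts replaces $\beta^i$ by its antisymmetric part $\bar\beta^i$ modulo an admissible lower-order error, and invoke \eqref{semidefinite 2}.

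I would present the second route as the main argument, with the doubling device mentioned as a remark, since the second route makes transparent why $p=2$ is essential (for $p\ne2$ one works with $|D^\alpha u|^p$ and the integration-by-parts trick that symmetrizes $\beta^i$ is not available in the same clean form). The steps, in order: (i) state that existence and uniqueness, plus the continuity properties, follow by the same approximation and compactness scheme as in Theorem \ref{theorem systemmain}, so it suffices to establish the a priori estimate \eqref{estimate of main theorem} with $p=2$; (ii) apply It\^o's formula to $\sum_{|\alpha|\le m}|D^\alpha u_t|_{L_2}^2$; (iii) expand the drift, isolating the first-order-in-$Du$ terms of top order $|\alpha|=m$; (iv) integrate by parts in $x$ to replace $\beta^i$ by $\bar\beta^i$ plus commutator terms of order $\le m$ that are controlled by $K$ and Gronwall; (v) apply \eqref{semidefinite 2} pointwise together with the martingale part estimate exactly as in the proof of Theorem \ref{theorem systemmain}; (vi) conclude by the Burkholder--Davis--Gundy inequality and Gronwall's lemma.

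\textbf{Main obstacle.} The delicate point is step (iv): after differentiating the equation $m$ times one must verify that \emph{all} terms arising from commuting $D^\alpha$ past the coefficients, and from the integration by parts that symmetrizes the first-order operator, are genuinely lower order or can be bounded by $\alpha^{ij}\langle D_i D^\beta u, D_j D^\beta u\rangle$-type quantities with constants depending only on $K,d,M$ — in particular the terms where a derivative falls on $\sigma^{ik}$ or $\nu^k$ need the bound on derivatives up to order $m+1$ in Assumption \ref{assumption systemcoeff}, and one must be careful that the degeneracy allowed by $\alpha\ge 0$ does not let any uncontrolled $|D^{m+1}u|$ appear. Managing this bookkeeping so that the surviving "bad" first-order term is exactly the one handled by \eqref{12.19.1} with $\bar\beta^i$, and nothing worse, is where the real work lies.
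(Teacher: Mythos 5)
Your second (main) route is essentially the paper's own argument: for $p=2$ the symmetric part of $\beta^i$ is removed by integration by parts at the cost of lower-order terms involving $D_i\beta^{ikl}$, controlled via Assumption \ref{assumption systemcoeff}, so the key estimate \eqref{12.17.3} only requires \eqref{semidefinite 2}; this is precisely the content of Remarks \ref{remark 1.23.3.14} and \ref{remark 1.22.3.14}, after which the smoothing/compactness scheme of Theorem \ref{theorem systemmain} is repeated (Lemma \ref{lemma smoothing} preserving Assumption \ref{assumption antisymmetric}). The bookkeeping you flag as the main obstacle is resolved in the paper not by applying It\^o's formula to all $|D^\alpha u|_{L_2}^2$ at once but by the induction of Lemma \ref{lemma estimate}: Remark \ref{remark 12.19.1} shows the once-differentiated system again satisfies Assumption \ref{assumption antisymmetric} with $m-1$ in place of $m$, the dangerous terms where a derivative falls on $a$ or $\sigma$ being absorbed through Lemma \ref{lemma derivative}, so no uncontrolled $|D^{m+1}u|$ appears.
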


\begin{remark}
Notice that Assumption \ref{assumption antisymmetric} obviously
 holds with $h^{i}=0$ if the matrices 
$\beta^i$ are symmetric and $\alpha\geq0$.  When $a=0$ and $\sigma=0$ 
then the system is called a {\it first order symmetric hyperbolic system}.  
\end{remark}

\mysection{Preliminaries}

 First we discuss the solvability of 
\eqref{system}-\eqref{systemini} 
under the strong stochastic parabolicity condition. 

\begin{assumption}                                                                   \label{assumption strongparabolicity}
There is a constant $\kappa>0$ such that 
$$
\alpha^{ij}\lambda_i\lambda_j\geq\kappa\sum_{i=1}^d|\lambda_i|^2
$$
for all $\omega\in\Omega$, $t\geq0$, $x\in\bR^d$ and $(\lambda_1,...,\lambda_d)\in\bR^d$. 
\end{assumption} 
If the above non-degeneracy assumption holds then we need weaker regularity conditions 
on the coefficients and the data than in the degenerate case. 
Recall that $m\geq0$ and make the following assumptions. 

\begin{assumption}                                                 \label{assumption nondegsystemcoeff}
The derivatives in $x\in\bR^d$ of $a^{ij}$ up to order $\max(m,1)$ and of $b^{i}$ and $c$ 
up to order $m$ are $\mathcal P\otimes\mathcal B(\bR^d)$-measurable 
functions, bounded by $K$ for all $i,j\in\{1,2,...,d\}$. The derivatives in $x$ of 
the $l_2$-valued functions $\sigma^{i}$ and 
$ l_2(\bT^{M})$-valued 
function $\nu$ up to order $m$ are 
$\mathcal P\otimes\mathcal B(\bR^d)$-measurable $l_2$-valued and 
$l_2(\bT^{M})$-valued functions, respectively, in magnitude bounded by $K$.  
\end{assumption}
\begin{assumption}                                                 \label{assumption nondegsystemdata}
The free data, $(f_t)_{t\in[0,T]}$ and $(g_t)_{t\in[0,T]}$ are 
predictable 
processes with values in $ W^{m-1}_2(\bR^{d},\bR^M)$ 
and $ W^m_2(\bR^{d},l_2(\bT^{M}))$, respectively, 
such that almost surely
$$
\mathcal K_{m-1,2}^2(T)=\int_0^T
\big(|f_t|^2_{W^{m-1}_2}+|g_t|^2_{W^{m}_2}\big)\,dt<\infty.
$$
The initial value, $\psi$ is an $\cF_0$-measurable 
random variable with values in $ 
W^m_2(\bR^{d},\bR^M)$. 
\end{assumption}

The following is a standard result from the $L_2$-theory of stochastic PDEs. 

\begin{theorem}                                                        \label{theorem L_2}
Let Assumptions \ref{assumption strongparabolicity}, 
 \ref{assumption nondegsystemcoeff} and \ref{assumption nondegsystemdata} 
hold with $m\geq0$. Then \eqref{system}-\eqref{systemini} has a unique solution 
$u$. Moreover, $u$ is a continuous $W^m_2(\bR^{d},\bR^M)$-valued process such that 
$u_t\in W^{m+1}(\bR^{d},\bR^M)$ for $P\times dt$ everyhere, and 
$$                                                                    
E\sup_{t\in[0,T]}|u_t|^2_{W^m_2}
+E\int_0^T|u_t|^2_{W^{m+1}_2}\,dt
$$
\begin{equation}                                         \label{nondegsystemestimate}
\leq N(E|\psi|^2_{W^m_2}
+E\int_0^T|f_t|^2_{W^{m-1}_2}+|g_t|^2_{W^m_2}\,dt) 
\end{equation}
with $N=N(\kappa,m,d, M,K,T)$. 
\end{theorem}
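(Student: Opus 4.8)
This is a classical statement of the $L_2$-theory of linear stochastic parabolic systems, so I would only outline the standard argument, which proceeds via the variational (monotonicity) framework; compare \cite{KR1977} and \cite{Ro} for the case $m=0$, together with the usual bootstrap in $m$. The setting is the Gelfand triple $W^{m+1}_2\hookrightarrow W^m_2\hookrightarrow W^{m-1}_2$ of $\bR^M$-valued Sobolev spaces. Under Assumptions \ref{assumption nondegsystemcoeff} and \ref{assumption nondegsystemdata}, for fixed $\omega,t$ the maps $u\mapsto a^{ij}D_{ij}u+b^iD_iu+cu+f_t$ and $u\mapsto(\sigma^{ik}D_iu+\nu^ku+g^k_t)_k$ send $W^{m+1}_2$ into $W^{m-1}_2$ and into $W^m_2(l_2(\bR^M))$ respectively, with linear growth; the real content of the theorem is that, thanks to Assumption \ref{assumption strongparabolicity}, this pair of operators satisfies the coercivity inequality underlying that theory. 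Hence the work splits into (i) the a priori estimate, (ii) existence by approximation, (iii) uniqueness, and (iv) time continuity.

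The first step is the a priori estimate. Granting enough regularity to differentiate \eqref{system} (justified a posteriori by the approximation below), I would apply It\^o's formula to $\sum_{|\alpha|\leq m}|D^\alpha u_t|^2_{L_2}=|u_t|^2_{W^m_2}$. The crucial cancellation is that, after an integration by parts, the term $-2(a^{ij}D_iD^\alpha u,D_jD^\alpha u)$ coming from the second-order part of the drift and the It\^o correction $\sum_k\|\sigma^{ik}D_iD^\alpha u\|^2_{L_2}$ coming from the stochastic term combine into $-(\alpha^{ij}D_iD^\alpha u,D_jD^\alpha u)$, which by Assumption \ref{assumption strongparabolicity} is at most $-\kappa\|DD^\alpha u\|^2_{L_2}$; summing over $|\alpha|\leq m$ produces a coercive term comparable to $-\kappa|u_t|^2_{W^{m+1}_2}$. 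Every remaining term --- the first-order and zeroth-order drift terms, the cross terms between $\sigma^{ik}D_iD^\alpha u$ and $D^\alpha(\nu^ku)$ in the It\^o correction, the commutators $[D^\alpha,a^{ij}D_{ij}]$, $[D^\alpha,\sigma^{ik}D_i]$, etc., and the terms carrying the free data --- is, after integration by parts to shed top-order derivatives and an application of Young's inequality, bounded by $\varepsilon|u_t|^2_{W^{m+1}_2}+N_\varepsilon(|u_t|^2_{W^m_2}+|f_t|^2_{W^{m-1}_2}+|g_t|^2_{W^m_2})$, where the exact number of derivatives required of the coefficients in Assumption \ref{assumption nondegsystemcoeff} is precisely what makes the commutator terms estimable. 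Taking $\varepsilon$ small, absorbing into the coercive term, taking expectations, and invoking Gronwall's lemma yields the bound on $E\int_0^T|u_t|^2_{W^{m+1}_2}\,dt$; the $E\sup_t$ bound then follows by controlling the stochastic integral with the Burkholder--Davis--Gundy inequality and re-absorbing.

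For existence I would run a Galerkin scheme: project \eqref{system} onto an increasing sequence of finite-dimensional subspaces of $W^{m+1}_2$ with dense union, solve the resulting finite It\^o SDE systems (globally solvable by the linear growth of the coefficients), note that the a priori estimate applies uniformly along the approximations, and extract a subsequence converging weakly in $L_2(\Omega\times[0,T];W^{m+1}_2)$ and weakly-$*$ in $L_2(\Omega;L_\infty([0,T];W^m_2))$; a routine passage to the limit identifies the limit as a solution in the sense of \eqref{systemsolution} and shows it obeys \eqref{nondegsystemestimate}. Uniqueness is then immediate from the a priori estimate applied to the difference of two solutions with vanishing data. Finally, once a solution is known to satisfy $u\in L_2([0,T];W^{m+1}_2)$ a.s.\ with drift in $W^{m-1}_2$ and diffusion coefficient in $W^m_2(l_2(\bR^M))$, the standard It\^o-formula argument in the triple $W^{m+1}_2\hookrightarrow W^m_2\hookrightarrow W^{m-1}_2$ gives a $W^m_2$-continuous modification, which finishes the proof.

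The only genuinely delicate point I expect is the bookkeeping in the a priori estimate: one must verify that, after differentiating $m$ times, \emph{every} term other than the principal one can indeed be dominated by an arbitrarily small multiple of $|u_t|^2_{W^{m+1}_2}$ plus the admissible lower-order quantities; this is where the precise smoothness demanded of the coefficients is used, and it is the place where care is needed, the rest being standard.
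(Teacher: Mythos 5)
Your outline is correct and is exactly the standard variational $L_2$-theory argument (coercivity from Assumption \ref{assumption strongparabolicity} after combining the integrated-by-parts second-order drift term with the It\^o correction, Galerkin approximation, Gronwall and Burkholder--Davis--Gundy, and the It\^o formula in the triple $W^{m+1}_2\hookrightarrow W^m_2\hookrightarrow W^{m-1}_2$ for continuity). The paper gives no proof of this theorem at all, stating it as a standard result of the $L_2$-theory (cf.\ \cite{KR1977}, \cite{KrR1981}, \cite{Ro}), so your sketch simply reproduces the argument the paper implicitly relies on, and it is consistent with it.
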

 
The crucial step in the proof of Theorem \ref{theorem main} 
is to obtain an apriori estimate, like estimate \eqref{estimate}. 
In order to discuss the way how such estimate can be proved, take $q=p$,
$M=1$, 
and 
for simplicity assume that $(a^{ij})$ is nonnegative definite, it is bounded and has bounded derivatives up to a sufficiently high order, and that all the other coefficients 
and free terms in equation \eqref{equation} are equal to zero. Thus we consider now the 
PDE
\begin{equation}                                                                    \label{simpleequation}
du(t,x)=a^{ij}(t,x)D_{ij}u(t,x)\,dt, \quad t\in[0,T],\quad x\in\bR^d, 
\end{equation}
with initial condition \eqref{ini}, where we assume that  
$\psi$ is a smooth function from $W^1_p$. We want to obtain the estimate 
\begin{equation}                                                                     \label{simpleestimate}
|u(t)|_{W^1_p}^p\leq N|\psi|^p_{W^1_p}
\end{equation}
for smooth solutions $u$ to \eqref{simpleequation}-\eqref{ini}. 
  
After applying $D_k$ to both sides of equation \eqref{simpleequation} and writing 
$v_k$ in place of $D_kv$, by the chain rule we have 
$$
d\sum_k|u_k|^{p}=p|u_k|^{p-1}u_k(a_k^{ij}u_{ij}+a^{ij}u_{ijk})\,dt. 
$$
Integrating over $\mathbb R^d$  we get 
$$
d\sum_k|u_k|^{p}_{L_p}=\int_{\mathbb R^d}Q(u)\,dx\,dt, 
$$
where 
$$
Q(u)=p|u_k|^{p-2}u_k(a^{ij}u_{ijk}+a_k^{ij}u_{ij}). 
$$
To obtain  \eqref{simpleestimate} we want to have the estimate  
\begin{equation}                                                                          \label{Q}
\int_{\mathbb R^d}Q(v)\,dx\leq N||v||^p_{W^1_p}
\end{equation}
for any smooth $v$ with compact support. To prove this we write 
$
\xi\sim\eta
$
if $\xi$ and $\eta$ have identical integrals over $\mathbb R^d$  and 
we write $\xi \preceq\eta$ if $\xi\sim\eta+\zeta$ such that 
$$
\zeta\leq N(|v|^p+|Dv|^p). 
$$
Then by integration by parts and standard estimates we have 
$$
|v_k|^{p-2}v_ka^{ij}v_{ijk}\preceq -(p-1)|v_k|^{p-2}a^{ij}v_{ki}v_{kj}. 
$$
By the simple inequality 
$\alpha\beta\leq \varepsilon^{-1} \alpha^2+ \varepsilon \beta^2$ 
we have 
$$
|v_k|^{p-2}v_ka_k^{ij}v_{ij}
\leq \varepsilon^{-1}|v_k|^p+\varepsilon |a_k^{ij}v_{ij}|^2. 
$$
for any $\varepsilon>0$. To estimate the term $ |a_k^{ij}v_{ij}|^2$
we use the following lemma, which is 
well-known from \cite{OR2}. 

\begin{lemma}                                   \label{lemma derivative}
Let $a=(a^{ij}(x))$ be a function defined on $\bR^d$, with values 
in the set of non-negative $m\times m$ matrices, such that $a$ and its derivatives in $x$ up 
second order are bounded in magnitude by a constant $K$. 
Let $V$ be a symmetric $m\times m$ matrix. 
Then 
$$
|Da^{ij}V^{ij}|^2
\leq Na^{ij}V^{ik}V^{jk}
$$
for 
 every $x\in\bR^d$, where $N$ is a constant depending only on 
$K$ and $d$. 
\end{lemma}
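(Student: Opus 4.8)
The plan is to reduce the inequality to a one–dimensional fact about non-negative $C^2$ functions and apply it entrywise. The key elementary fact is: if $\varphi:\bR\to[0,\infty)$ is $C^2$ with $|\varphi''|\le C$ on $\bR$, then $|\varphi'(t)|^2\le 2C\,\varphi(t)$ for all $t$ (this is immediate from the Taylor/Peano remainder: $0\le\varphi(t+s)\le\varphi(t)+\varphi'(t)s+\tfrac12 Cs^2$ for all $s$, so the discriminant of the right-hand side in $s$ is $\le0$). First I would fix $x\in\bR^d$ and a direction $e\in\bR^d$, and consider, for a fixed pair of indices, the scalar function $t\mapsto a^{ij}(x+te)$; more usefully, to handle the quadratic form I would freeze the symmetric matrix $V$ and study $\varphi(t):=\langle a(x+te)\xi,\xi\rangle$ for an arbitrary vector $\xi\in\bR^m$. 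Since $a$ takes values in non-negative matrices, $\varphi\ge0$; since $a$ has second derivatives bounded by $K$, we get $|\varphi''|\le C(K,d)$; hence $|\varphi'(0)|^2\le 2C\,\varphi(0)$, i.e.
\begin{equation*}
|\langle (e\cdot D)a(x)\,\xi,\xi\rangle|^2\le N\,\langle a(x)\xi,\xi\rangle
\end{equation*}
with $N=N(K,d)$, uniformly in the unit vector $e$ and in $\xi$.

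The second step is to pass from this scalar quadratic-form bound to a bound on the bilinear form $\langle (e\cdot D)a(x)\,\xi,\eta\rangle$ for $\xi\ne\eta$. For a symmetric matrix $B$ one has the polarization-type estimate $|\langle B\xi,\eta\rangle|\le \tfrac12(|\langle B\xi,\xi\rangle|^{1/2}|\langle B\eta,\eta\rangle|^{1/2})\cdot(\cdots)$ — more cleanly, if $B=(e\cdot D)a(x)$ and we write $C:=\langle B\xi,\xi\rangle$ etc., the right statement to use is the matrix Cauchy–Schwarz inequality for the non-negative form $a(x)$ itself after absorbing the derivative: I would instead argue directly that for any non-negative matrix $a$ and any matrix $b$ with $|\langle b\xi,\xi\rangle|^2\le N\langle a\xi,\xi\rangle$ for all $\xi$, one has $|\langle b\xi,\eta\rangle|^2\le N\langle a\xi,\xi\rangle^{1/2}\langle a\eta,\eta\rangle^{1/2}$ — obtained by applying the scalar bound to $\xi+s\eta$ and optimizing in $s\in\bR$. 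Applying this with $b=D_k a(x)$ (the derivative in the $k$-th coordinate direction, one $k$ at a time), $\xi=V^{\cdot k}$ the $k$-th column of $V$, and summing over $k$, the left side becomes $|D_k a^{ij}V^{ij}|^2$ after using $\sum_k|\langle (D_ka)V^{\cdot k},V^{\cdot k}\rangle|\le\big(\sum_k\langle aV^{\cdot k},V^{\cdot k}\rangle\big)$ via Cauchy–Schwarz in $k$, and the right side is $N\sum_k\langle a V^{\cdot k},V^{\cdot k}\rangle=N\,a^{ij}V^{ik}V^{jk}$. Care is needed that $V$ is symmetric so that $V^{ij}$ can be grouped as columns in the way the statement demands; this is exactly where symmetry of $V$ is used.

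The main obstacle — or rather the only point requiring attention — is the bookkeeping in going from the one-direction, one-vector inequality to the stated contracted form $|Da^{ij}V^{ij}|^2\le N a^{ij}V^{ik}V^{jk}$, i.e. organizing the sums over the derivative index and over the repeated matrix index so that no cross terms are lost and the constant stays dimensional. Everything else is the elementary lemma about non-negative $C^2$ functions plus two applications of the Cauchy–Schwarz/polarization trick, with constants depending only on $K$ and $d$; there is no analytic subtlety and no need for the evolution equation at this stage.
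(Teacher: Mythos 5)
The paper itself does not prove this lemma (it is taken as known from \cite{OR2}), so your proposal has to be measured against the standard argument. Your first ingredient is indeed the standard one and is fine up to a homogeneity slip: for $\varphi(t)=\langle a(x+te)\xi,\xi\rangle$ one has $|\varphi''|\le C(K)|\xi|^{2}$, so the correct conclusion is $|\langle (e\cdot D)a(x)\xi,\xi\rangle|^{2}\le N|\xi|^{2}\langle a(x)\xi,\xi\rangle$; the factor $|\xi|^{2}$ cannot be dropped (as stated, your inequality has degree $4$ in $\xi$ on the left and degree $2$ on the right), and the same inconsistency propagates into your claimed bilinear estimate $|\langle b\xi,\eta\rangle|^{2}\le N\langle a\xi,\xi\rangle^{1/2}\langle a\eta,\eta\rangle^{1/2}$.

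The genuine gap is in the second step, the contraction bookkeeping you yourself flag as ``the main obstacle'': it does not work as described. The quantity to estimate, $D_l a^{ij}V^{ij}=\mathrm{tr}\big((D_la)V\big)$, is \emph{linear} in $V$, whereas inserting the columns $V^{\cdot k}$ into both slots of the form gives $\sum_k\langle (D_la)V^{\cdot k},V^{\cdot k}\rangle=D_la^{ij}V^{ik}V^{jk}$, which is \emph{quadratic} in $V$; so ``the left side becomes $|D_ka^{ij}V^{ij}|^{2}$'' is false, and you are also conflating $k$ as a derivative direction in $\bR^d$ with $k$ as a column index in $\bR^m$. The natural repair within your scheme --- pair $V^{\cdot k}$ with $e_k$, so that $\sum_k\langle(D_la)V^{\cdot k},e_k\rangle=\mathrm{tr}((D_la)V)$, and use a (correctly homogeneous) bilinear bound $|\langle(D_la)\xi,\eta\rangle|\le N\big(|\eta|\langle a\xi,\xi\rangle^{1/2}+|\xi|\langle a\eta,\eta\rangle^{1/2}\big)$ --- is still insufficient: it produces the term $\sum_k|V^{\cdot k}|(a^{kk})^{1/2}$, which is not dominated by $\big(a^{ij}V^{ik}V^{jk}\big)^{1/2}$; e.g.\ for $a=\begin{pmatrix}1&-1\\-1&1\end{pmatrix}$ and $V=\tfrac1{\sqrt2}\begin{pmatrix}1&1\\1&1\end{pmatrix}$ one has $a^{ij}V^{ik}V^{jk}=\mathrm{tr}(aV^{2})=0$ while $\sum_k|V^{\cdot k}|(a^{kk})^{1/2}=2$. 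The missing idea is to use the symmetry of $V$ through its spectral decomposition rather than through its columns: write $V=\sum_\rho\lambda_\rho\,\xi_\rho\xi_\rho^{*}$ with orthonormal eigenvectors $\xi_\rho$; then $D_la^{ij}V^{ij}=\sum_\rho\lambda_\rho\langle(D_la)\xi_\rho,\xi_\rho\rangle$ and $a^{ij}V^{ik}V^{jk}=\sum_\rho\lambda_\rho^{2}\langle a\xi_\rho,\xi_\rho\rangle$, so your one-vector inequality applied to each unit vector $\xi_\rho$ together with the Cauchy--Schwarz inequality in $\rho$ gives $|D_la^{ij}V^{ij}|^{2}\le 2Cm\,a^{ij}V^{ik}V^{jk}$, and summing over $l=1,\dots,d$ yields the lemma. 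With this replacement of your second step the proof is complete; no bilinear/polarization estimate is needed.
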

By this lemma 
$
|a_k^{ij}v_{ij}|^2\leq N a^{ij}v_{il}v_{jl}. 
$
Hence 
$$
|v_k|^{p-2}v_ka_k^{ij}v_{ij}
\preceq N\varepsilon |v_k|^{p-2}a^{ij}v_{il}v_{jl}.  
$$
Thus for each fixed $k=1,2,...,d$ we have 
\begin{equation}                                                                            \label{Qproblem}
Q(v)\preceq -p(p-1)|v_k|^{p-2}a^{ij}v_{ki}v_{kj}
+\varepsilon |v_k|^{p-2}a^{ij}v_{il}v_{jl}
\end{equation}
for any $\varepsilon>0$. Notice that for each fixed $k$ there is 
a summation with respect to $l$ over $\{1,2,...,d\}$ in the expression 
$\varepsilon|v_k|^{p-2}a^{ij}v_{il}v_{jl}$, and   
terms with $l\neq k$ cannot be killed by the expression  
\begin{equation}                                                                           \label{term}
-p(p-1)|v_k|^{p-2}a^{ij}v_{ki}v_{kj}.
\end{equation}
Hence we can get \eqref{Q} when $d=1$ or $p=2$, but we does not get it 
for $p>2$ and $d>1$. To cancel every term in the sum 
$\varepsilon|v_k|^{p-2}a^{ij}v_{il}v_{jl}$ we need an expression like 
$$
-\nu|v_k|^{p-2}a^{ij}v_{li}v_{lj},  
$$
with a constant $\nu$, in place of \eqref{term}, 
for each $k\in\{1,..,d\}$ in the right-hand side of \eqref{Qproblem}. 
This suggests to get \eqref{simpleestimate} via an equation for 
$|\,|Du|^2|_{L_{p/2}}^{p/2}$
instead of that for $\sum_k |D_ku|_{L_p}^p$. 

Let us test this idea. 
From 
$$
du_{k}=(a^{ij}u_{ijk}+a^{ij}_{k}u_{ij})\,dt 
$$
by the chain rule we have 
$$
d|Du|^{2}=2u_{k}a^{ij}u_{ijk}\,dt+2u_{k}a^{ij}_{k}u_{ij}\,dt
\leq a^{ij}[|Du|^{2}]_{ij}\,dt-2a^{ij}u^{ik}u_{jk}\,dt
$$
$$
+N|Du|[a^{ij}u_{ik}u_{jk}]^{1/2}\,dt
\leq a^{ij}[|Du|^{2}]_{ij}\,dt +N|Du|^{2}\,dt
$$
with a constant $N$. Hence 
$$
d(|Du|^{2})^{p/2}\leq(p/2)|Du|^{p-2}a^{ij}[|Du|^{2}]_{ij}\,dt
 +N|Du|^{p}\,dt
$$
$$
\preceq - (p/2)|Du|^{p-2}a^{ij}_{j}[|Du|^{2}]_{i}\,dt+N|Du|^{p}\,dt
$$
$$
=-a^{ij}_{j}[|Du|^{p}]_{i}\,dt+N|Du|^{p}\,dt\preceq  N|Du|^{p}\,dt, 
$$
which implies 
$$
|\,|Du|^2|_{L_{p/2}}^{p/2}\leq N|\,|D\psi|^2| _{L_{p/2}}^{p/2}, 
$$
by Gronwall's lemma. Consequently,  estimate \eqref{simpleestimate} follows, 
since it is not difficult to 
see that 
$$
|u(t)|^p_{L_p}\leq N|\psi|^p_{L_p}
$$
holds.

The following lemma on It\^o's formula in the special case $M=1$ 
is Theorem 2.1 from \cite{K}. The proof of this multidimensional variant goes the same way, and therefore will be omitted. Note that for $p\geq 2$ the second derivative, 
$D_{ij}\langle x\rangle^p$ of the function $(x_1,x_2,\ldots,x_M)\rightarrow\langle x\rangle^p$ 
for $p\geq2$ is
$$
p(p-2)\langle x\rangle^{p-4}x_ix_j+p\langle x\rangle^{p-2}\delta_{ij}, 
$$
which makes the last term in \eqref{itoformula lp} below
 natural. Here and later on we use the convention $0\cdot 0^{-1}:=0$ whenever such terms occur.

\begin{lemma}												\label{lemma itoformula}
Let $p\geq2$ and let  $\psi=(\psi^k)_{k=1}^M$ be an $L_p(\bR^{d},
\bR^M)$-valued 
$\cF_0$-measurable random variable. For $ i=0,1,2,...,d$ and 
$k=1,...,M$ 
let $f^{ki}$ and $(g^{kr})_{r=1}^{\infty}$ be predictable functions 
on $\Omega\times(0,T]$, with values in $L_p$ and in $L_p(l_2)$, respectively, 
such that 
$$
\int_0^T\big(\sum_{i,k}|f^{ki}_t|_{L_{p}}^p+
\sum_{k}|g^{k\cdot}_t|_{L_{p}}^p\big)\,dt<\infty\quad\text{(a.s.)}. 
$$
Suppose that 
for each $k=1,...,M$ we are given a $W^1_p$-valued predictable function 
 $u^k$ on $\Omega\times(0,T]$ such that 
 $$
 \int_0^T|u^k_t|^p_{W^1_p}\,dt<\infty\,(a.s.), 
 $$
and  for any $\phi\in C_0^{\infty}$ 
 with probability 1 for all $t\in[0,T]$ we have
$$
(u_t^k,\phi)=(\psi^k,\phi)+
\int_0^t(g^{kr}_s,\phi)\,dw^r_s
+\int_0^t((f^{k0}_s,\phi)-(f^{ki}_s,D_i\phi))\,ds.
$$
Then there exists a set $\Omega'\subset\Omega$ of full probability 
such that 
$$
u={\mathbf 1}_{\Omega^{\prime}}(u^1,...,u^k)_{t\in[0,T]}
$$  
  is a continuous $L_p(\bR^{d},\bR^{M})$-valued process, and for all $t\in[0,T]$
$$
\int_{\bR^d}\langle u_t\rangle^p\,dx
=\int_{\bR^d}\langle \psi\rangle^p\,dx
+\int_0^t\int_{\bR^d}p\langle u_s\rangle^{p-2}\langle 
u_s,g_s^{  r}\rangle\,dx\,dw_s^{ r}
$$
$$
+\int_0^t\int_{\bR^d}\Big(p\langle u_s\rangle^{p-2}\langle u_s, f_s^0\rangle-p\langle u_s\rangle^{p-2}\langle D_i u_s,f_s^i\rangle
$$
$$
-(1/2)p(p-2)\langle u_s\rangle^{p-4}
\langle u_s,f^i_s\rangle D_i\langle u_s\rangle^2
$$
\begin{equation}														\label{itoformula lp}
+\sum_r\big[(1/2)p(p-2)\langle u_s\rangle^{p-4}\langle
 u_s,g_s^{ r}\rangle^2
+(1/2)p\langle u_s\rangle^{p-2}\langle 
g_s^{  r}\rangle^2\big]\Big)\,dx\,ds, 
\end{equation}
where $f^{i}:=(f^{ki})_{k=1}^{M}$ and $g^{  r}
:=(g^{kr})_{k=1}^M$ 
for all $i=0,1,...,d$ and $r=1,2,...$.
\end{lemma}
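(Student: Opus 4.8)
The plan is to follow the mollification argument of \cite{K}, which treats the case $M=1$, applying it to each component. Fix a nonnegative $\zeta\in C_0^\infty(\bR^d)$ with $\int\zeta\,dx=1$, put $\zeta_\varepsilon(x)=\varepsilon^{-d}\zeta(x/\varepsilon)$, and write $v^{(\varepsilon)}=v*\zeta_\varepsilon$ for mollification in the $x$-variable. Taking $\phi=\zeta_\varepsilon(x-\cdot)$ in the weak equation for $u^k$ shows that for every fixed $x$ the process $t\mapsto u^{(\varepsilon),k}_t(x)$ is a genuine continuous It\^o process,
$$
du^{(\varepsilon),k}_t(x)=g^{(\varepsilon),kr}_t(x)\,dw^r_t+\big(f^{(\varepsilon),k0}_t(x)+D_if^{(\varepsilon),ki}_t(x)\big)\,dt,
$$
which is smooth in $x$; moreover, for $P\times dt$-almost every $(\omega,t)$ the functions $u^{(\varepsilon)}_t$, $Du^{(\varepsilon)}_t$, $f^{(\varepsilon),i}_t$, $g^{(\varepsilon)}_t$ lie in $L_p$, resp.\ $L_p(l_2)$, with norms bounded by those of $u_t$, $Du_t$, $f^i_t$, $g_t$. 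To keep all quantities finite I localize by $\tau_R=T\wedge\inf\{t:\int_0^t(\sum_{i,k}|f^{ki}_s|_{L_p}^p+\sum_k|g^{k\cdot}_s|_{L_p}^p+|u_s|_{W^1_p}^p)\,ds\geq R\}$.

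Since for $p\geq2$ the map $x\mapsto\langle x\rangle^p$ belongs to $C^2(\bR^M)$, with gradient $p\langle x\rangle^{p-2}x_k$ and second derivative as recalled before the statement, the classical finite-dimensional It\^o formula applies to $\langle u^{(\varepsilon)}_t(x)\rangle^p$ for each fixed $x$. Integrating the resulting identity over $x\in\bR^d$ (a stochastic Fubini theorem justifies interchanging with the $dt$- and $dw^r$-integrals) and integrating by parts in $x$ in the term coming from $D_if^{(\varepsilon),ki}$, via
$$
D_i\big(p\langle u^{(\varepsilon)}\rangle^{p-2}u^{(\varepsilon),k}\big)=p\langle u^{(\varepsilon)}\rangle^{p-2}D_iu^{(\varepsilon),k}+\tfrac12p(p-2)\langle u^{(\varepsilon)}\rangle^{p-4}u^{(\varepsilon),k}D_i\langle u^{(\varepsilon)}\rangle^2,
$$
I obtain \eqref{itoformula lp} on $[0,\tau_R]$ with $u,\psi,f^i,g^r$ replaced by $u^{(\varepsilon)},\psi^{(\varepsilon)},f^{(\varepsilon),i},g^{(\varepsilon),r}$. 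On $[0,\tau_R]\times\bR^d$, and after integration in $\omega$, every term there is finite: each is dominated, after Young's inequality and H\"older's inequality with exponents $\tfrac{p}{p-2},\tfrac{p}{2}$ or $\tfrac{p}{p-1},p$ in $x$ and a further H\"older in $t$, by quantities controlled by $\int_0^{\tau_R}|u_s|_{W^1_p}^p\,ds$, $\int_0^{\tau_R}\sum_k|g^{k\cdot}_s|_{L_p}^p\,ds$ and $\int_0^{\tau_R}\sum_{i,k}|f^{ki}_s|_{L_p}^p\,ds$, all $\leq R$ (when $p=2$ the terms with the factor $p-2$ are absent and this is simpler); the quadratic variation $\int_0^{\tau_R}\sum_r(\int_{\bR^d}p\langle u^{(\varepsilon)}\rangle^{p-2}\langle u^{(\varepsilon)},g^{(\varepsilon),r}\rangle\,dx)^2\,ds$ of the martingale part, which also sees the whole time interval, is bounded by Cauchy--Schwarz by $p^2\int_0^{\tau_R}|u^{(\varepsilon)}_s|_{L_p}^{2(p-1)}|g^{(\varepsilon)}_s|_{L_p(l_2)}^2\,ds$, and after the Burkholder--Davis--Gundy (BDG) inequality, Young's inequality with a small parameter and H\"older in $t$, its contribution is absorbed into the left side of a self-improving estimate, yielding $E\sup_{t\leq\tau_R}|u^{(\varepsilon)}_t|_{L_p}^p\leq C_R$ with $C_R$ independent of $\varepsilon$.

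It remains to let $\varepsilon\downarrow0$. Along a subsequence, $u^{(\varepsilon)}_t\to u_t$ and $D_iu^{(\varepsilon)}_t\to D_iu_t$ in $L_p$, $f^{(\varepsilon),i}_t\to f^i_t$ in $L_p$, $g^{(\varepsilon)}_t\to g_t$ in $L_p(l_2)$ for $P\times dt$-almost every $(\omega,t)$, and $\psi^{(\varepsilon)}\to\psi$ in $L_p$ almost surely. Since $\langle u^{(\varepsilon)}\rangle^p\to\langle u\rangle^p$ in $L_1$, Vitali's theorem gives $\langle u^{(\varepsilon)}\rangle^{p-2}\to\langle u\rangle^{p-2}$ in $L_{p/(p-2)}$; combining this with the domination above and the uniform bound $E\sup_{t\leq\tau_R}|u^{(\varepsilon)}_t|_{L_p}^p\leq C_R$, each $x$-integral in the $ds$-part of \eqref{itoformula lp} converges in $L_1([0,\tau_R]\times\Omega)$, while the $l_2$-valued family $(\int_{\bR^d}p\langle u^{(\varepsilon)}\rangle^{p-2}\langle u^{(\varepsilon)},g^{(\varepsilon),r}\rangle\,dx)_r$ converges in $L_2([0,\tau_R]\times\Omega;l_2)$; hence the Lebesgue-integral terms converge and, by BDG, so does the stochastic term in $L_2(\Omega;C([0,\tau_R]))$. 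This gives \eqref{itoformula lp} on $[0,\tau_R]$, and, since $\tau_R\uparrow T$ almost surely, on $[0,T]$. Finally, applying the same computation to the difference $u^{(\varepsilon)}-u^{(\varepsilon')}$ (which has the same divergence-form structure, with free data only) together with BDG gives $E\sup_{t\leq\tau_R}|u^{(\varepsilon)}_t-u^{(\varepsilon')}_t|_{L_p}^p\to0$ as $\varepsilon,\varepsilon'\downarrow0$; thus $(u^{(\varepsilon)})$ is Cauchy in $L_p(\Omega;C([0,\tau_R];L_p))$, and its limit, which agrees with $u$ for $dt$-almost every $t$, is the required continuous $L_p(\bR^d,\bR^M)$-valued modification $u={\mathbf 1}_{\Omega'}(u^1,\dots,u^M)$ over a set $\Omega'$ of full probability, patched over $R$.

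The step I expect to be the main obstacle is the passage to the limit in the terms carrying the powers $\langle u^{(\varepsilon)}\rangle^{p-2}$ and $\langle u^{(\varepsilon)}\rangle^{p-4}$, i.e.\ controlling the nonlinearity on and near the zero set $\{u=0\}$ and checking that the convention $0\cdot0^{-1}:=0$ is respected in the limit. The clean way around it, as in \cite{K}, is to prove the formula first with $\langle x\rangle^p$ replaced by the genuinely smooth $(\langle x\rangle^2+\delta)^{p/2}$, $\delta>0$, where no singularity arises, and only then let $\delta\downarrow0$ by dominated convergence using exactly the bounds above.
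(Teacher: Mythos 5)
The paper itself contains no proof of this lemma: it is quoted as the $M$-dimensional version of Theorem 2.1 of \cite{K}, with the remark that the proof goes the same way and is therefore omitted. Your argument --- mollify in $x$ by testing with $\zeta_\varepsilon(x-\cdot)$, apply the finite-dimensional It\^o formula to $\langle u^{(\varepsilon)}_t(x)\rangle^p$ (legitimate, since $\langle\cdot\rangle^p\in C^2(\bR^M)$ for $p\ge2$, with the second derivatives displayed before the lemma), integrate over $x$, integrate by parts in the $f^i$-terms, and pass to the limit --- is essentially the strategy of the cited proof, and your algebra (the integration-by-parts identity and the second-order terms) reproduces \eqref{itoformula lp} correctly. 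So you are reconstructing the intended argument rather than taking a different route.

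There is, however, one step that fails as written. Your localization is only by $\tau_R$, which controls the time integrals of $|u_t|^p_{W^1_p}$, $|f^{ki}_t|^p_{L_p}$ and $|g^{k\cdot}_t|^p_{L_p}$, while the lemma assumes no integrability of $\psi$ at all; consequently the claimed uniform bound $E\sup_{t\le\tau_R}|u^{(\varepsilon)}_t|^p_{L_p}\le C_R$ can fail already at $t=0$, and the Cauchy estimate $E\sup_{t\le\tau_R}|u^{(\varepsilon)}_t-u^{(\varepsilon')}_t|^p_{L_p}\to0$, which needs $E|\psi^{(\varepsilon)}-\psi^{(\varepsilon')}|^p_{L_p}\to0$, is unjustified. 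The standard repair is to multiply $u,\psi,f,g$ by ${\mathbf 1}_{\{|\psi|_{L_p}\le R\}}$, an $\cF_0$-measurable factor that preserves the weak equation, run your argument for the truncated data, and remove this truncation together with $\tau_R$ at the end; the stochastic-Fubini interchange, which you dispose of in a parenthesis, needs the same kind of integrability bookkeeping and is where much of the genuine work lies. Two smaller remarks: the ``main obstacle'' you flag at the end is not one, since $\langle x\rangle^p$ is already $C^2$ for $p\ge2$ and every factor $\langle u\rangle^{p-4}$ occurs multiplied by $\langle u,\cdot\rangle^2\le\langle u\rangle^2(\cdots)$, so the $\delta$-smoothing is harmless but unnecessary; and the lemma asserts continuity of the given process $u$ itself on a set of full probability, not merely of a modification --- this does follow from your construction because $(u_t,\varphi)$ is by hypothesis continuous in $t$ for each test function, which identifies $u_t$ with your continuous limit for every $t$, but it should be said explicitly.
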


\mysection{The main estimate}

Here we consider the problem 
\eqref{system}-\eqref{systemini} with 
$a_{t}=(a^{ij}_{t}(x))$ taking values  in the set
of nonnegative symmetric $d\times d$
matrices and the other  coefficients and  the data
 are described in \eqref{4.12.1}. We also assume that
on $\Omega\times(0,\infty)\times\bR^{d}$
we are given an $\bR^{d}$-valued 
function $h_{t}(x)$.

\begin{lemma}
                                                                                                 \label{lemma 12.17.1}
Suppose that Assumptions \ref{assumption systemcoeff},
\ref{assumption systemdata}, 
and \ref{assumption systemparabolicity} hold with $m\geq0$. 
Assume that  
$u=(u_t)_{t\in[0,T]}$ is a solution of 
 \eqref{system}-\eqref{systemini}   
on $[0,T]$ (as defined before
Theorem \ref{theorem systemmain}). Then (a.s.) $u$ is a 
continuous $L_p(\bR^{d},\bR^{M})$-valued process, 
and there is a constant $N=N(p,K,d,M,K_0)$ such that
$$
d\int_{\bR^{d}}\langle u_{t}\rangle^{p} \,dx
+(p/4)\int_{\bR^{d}}\langle  u_{t}\rangle^{p-2}
\alpha^{ij}_{t}\langle D_{i}u_{t},D_{j}u_{t}\rangle \,dx\,dt
$$
$$
\leq p\int_{\bR^{d}}\langle u_{t}\rangle^{p-2} 
\langle u_{t},\sigma^{ik}D_{i}u_{t}+
\nu^{k}_{t}u_{t}+g^{k}_{t}\rangle
\,dx\,dw^{k}_{t}
$$
\begin{equation}
                                              \label{12.17.3}
+N\int_{\bR^{d}}\big [\langle u_{t}\rangle^{p}
+\langle f_{t}\rangle^{p} 
+\big(\sum_{k}\langle g^{k}_{t}\rangle^{2 }\big)^{p/2}
 +\big(\sum_{k}\langle D g^{k}_{t}\rangle^{2 }
\big)^{p/2} \big]\,dx\,dt.  
\end{equation}
\end{lemma}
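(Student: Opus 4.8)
The plan is to apply the Itô formula of Lemma~\ref{lemma itoformula} to the solution $u$, with the identifications dictated by the definition of solution to \eqref{system}--\eqref{systemini}. Writing the equation in the divergence form appearing in \eqref{systemsolution}, we take $f^{ki}=-a^{ij}D_ju^k$ for $i=1,\dots,d$, $f^{k0}=\bar b^{i}_{kl}D_iu^l+c_{kl}u^l+f^k$, and $g^{kr}=\sigma^{ir}D_iu^k+\nu^{r}_{kl}u^l+g^{kr}$, so that the hypotheses of Lemma~\ref{lemma itoformula} are met (the integrability follows from the definition of solution together with Assumptions~\ref{assumption systemcoeff}--\ref{assumption systemdata}). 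This immediately yields that $u$ is a continuous $L_p(\bR^d,\bR^M)$-valued process, and produces an identity for $d\int_{\bR^d}\langle u_t\rangle^p\,dx$ with explicit drift and martingale parts.

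The next step is to rewrite the drift. The martingale term in \eqref{itoformula lp} is already in the form on the right-hand side of \eqref{12.17.3}. In the drift, the term $-p\langle u\rangle^{p-2}\langle D_iu,f^i\rangle=p\langle u\rangle^{p-2}a^{ij}\langle D_iu,D_ju\rangle$ is the only one carrying second-order information; the corresponding quadratic-variation contribution $\tfrac12 p\langle u\rangle^{p-2}\sum_r\langle g^r\rangle^2$ contains the term $\tfrac12 p\langle u\rangle^{p-2}\sigma^{ir}\sigma^{jr}\langle D_iu,D_ju\rangle$ with the opposite sign, and these combine into $-\tfrac12 p\langle u\rangle^{p-2}\alpha^{ij}\langle D_iu,D_ju\rangle$, which is the good (dissipative) term since $\alpha\ge0$ is not assumed but $\alpha$ appears with a favourable sign here. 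All remaining terms must be controlled by $\tfrac14 p\langle u\rangle^{p-2}\alpha^{ij}\langle D_iu,D_ju\rangle$ plus the right-hand side of \eqref{12.17.3}.

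The terms to be absorbed are: (i) the cross term $p\langle u\rangle^{p-2}\sigma^{ir}\langle u,\nu^r D_iu\rangle$ and the genuine first-order term $p\langle u\rangle^{p-2}\langle u,\bar b^i D_iu\rangle$ arising from $\langle u,f^0\rangle$, together with the analogous pieces of $\langle u,g^r\rangle^2$; and (ii) the lower-order terms $p\langle u\rangle^{p-2}\langle u,c u+f\rangle$, the $(p-2)$-type terms $\tfrac12 p(p-2)\langle u\rangle^{p-4}\langle u,f^i\rangle D_i\langle u\rangle^2$, and the $\langle u\rangle^{p-4}\langle u,g^r\rangle^2$, $\langle u\rangle^{p-2}\langle g^r\rangle^2$ terms. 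For (ii) one uses boundedness of $c$, $\nu$, $\sigma$, Young's inequality $ab\le \varepsilon a^{q'}+C_\varepsilon b^q$, and the elementary bounds $\langle u\rangle^{p-4}|\langle u,f^i\rangle||D_i\langle u\rangle^2|\le N\langle u\rangle^{p-2}|Du||D_ju|\cdot|a|$, finally invoking Lemma~\ref{lemma derivative} (with $V^{ij}=\langle D_iu,D_ju\rangle^{1/2}$-type entries, or more directly applied to $a$ itself) only where derivatives of $a$ enter; here, since $a$ has no derivatives falling on it in the $L_p$-identity, the absorption is by Young's inequality into $\varepsilon\langle u\rangle^{p-2}a^{ij}\langle D_iu,D_ju\rangle$ using that $a^{ij}\xi_i\xi_j\ge0$ is a seminorm, i.e. $|a^{ij}\eta_i\xi_j|^2\le (a^{ij}\eta_i\eta_j)(a^{kl}\xi_k\xi_l)$.

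The crux --- and the step I expect to be the main obstacle --- is (i): the reformulation recorded in Remark~\ref{remark reformulation}. By \eqref{12.19.1} (applied pointwise in $x$ to $u=u_t(x)$), one has
$$
\langle u,\bar b^i D_iu\rangle - \sigma^{ir}\langle u,\nu^r D_iu\rangle
\le K_0\Big|\sum_{i,j}\alpha^{ij}\langle D_iu,D_ju\rangle\Big|^{1/2}\langle u\rangle + h^i\langle D_iu,u\rangle,
$$
modulo the harmless difference between $b^i$ and $\bar b^i=b^i-D_ja^{ij}I_M$, which produces a term $\langle u\rangle^{p-2}\langle u, (D_ja^{ij})D_iu\rangle$ handled by Lemma~\ref{lemma derivative} and Young. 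Multiplying by $p\langle u\rangle^{p-2}$, the first term on the right is bounded via $K_0 ab\le \varepsilon a^2 + C_\varepsilon b^2$ by $\varepsilon\, p\langle u\rangle^{p-2}\alpha^{ij}\langle D_iu,D_ju\rangle + C_\varepsilon p\langle u\rangle^p$; the term $p h^i\langle u\rangle^{p-2}\langle D_iu,u\rangle=\langle u\rangle^{p-2}h^i D_i\langle u\rangle^2\cdot\tfrac{p}{2}= \tfrac12 h^i D_i\langle u\rangle^p$ integrates (after integration by parts, legitimate by the compact-support/density argument and the $W^1_p$-integrability of $u$) to $-\tfrac12(D_ih^i)\langle u\rangle^p$, bounded by $N\langle u\rangle^p$ using $|Dh|\le K$. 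Choosing $\varepsilon$ small enough that all the absorbed quadratic terms together do not exceed $\tfrac14 p\langle u\rangle^{p-2}\alpha^{ij}\langle D_iu,D_ju\rangle$, and collecting the constants (all depending only on $p,K,d,M,K_0$), gives \eqref{12.17.3}. The one delicate point throughout is that $\alpha$ need not be nonnegative definite as a matrix --- only $\alpha^{ij}\lambda_i\lambda_j\ge0$ is used, via the Cauchy--Schwarz inequality for the nonnegative bilinear form $(\eta,\xi)\mapsto a^{ij}\langle\eta_i,\xi_j\rangle$ and for $\alpha$ itself --- so every absorption must be phrased in terms of the quadratic form, never of matrix square roots.
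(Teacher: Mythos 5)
Your overall strategy is the paper's: apply Lemma \ref{lemma itoformula} with the flux/drift/noise read off from \eqref{systemsolution}, combine the $-p\langle u\rangle^{p-2}a^{ij}\langle D_iu,D_ju\rangle$ term with the $+\tfrac12 p\langle u\rangle^{p-2}\sigma^{ik}\sigma^{jk}\langle D_iu,D_ju\rangle$ part of the It\^o correction into $-\tfrac12 p\langle u\rangle^{p-2}\alpha^{ij}\langle D_iu,D_ju\rangle$, control the first-order block through Remark \ref{remark reformulation}, and integrate by parts $h^iD_i\langle u\rangle^{p}$. However, the places where you propose absorption by Young's inequality are exactly the places where the degeneracy makes absorption impossible, and these are the heart of the lemma.

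Concretely: (a) the $(p-2)$-type terms. Your plan to bound $\langle u\rangle^{p-4}|\langle u,f^i\rangle|\,|D_i\langle u\rangle^2|$ crudely by $N\langle u\rangle^{p-2}|Du|^2$, or to absorb it into $\varepsilon\langle u\rangle^{p-2}a^{ij}\langle D_iu,D_ju\rangle$, cannot work: once the $a$-form has been spent to create $-\tfrac12 p\,\alpha$, the only dissipation appearing in \eqref{12.17.3} is the $\alpha$-form, and neither $|Du|^2$ nor the $a$-form is dominated by it when $\alpha$ degenerates (e.g. $2a=\sigma\sigma^*$, $\alpha\equiv0$). The correct treatment is an exact pairing, not an absorption: the term $-\tfrac14 p(p-2)\langle u\rangle^{p-4}a^{ij}D_i\langle u\rangle^2D_j\langle u\rangle^2$ combines with the $\sigma\sigma$-part of $\tfrac12 p(p-2)\langle u\rangle^{p-4}\sum_r\langle u,\sigma^{ir}D_iu+\nu^ru+g^r\rangle^2$ to give $-\tfrac18 p(p-2)\langle u\rangle^{p-4}\alpha^{ij}D_i\langle u\rangle^2D_j\langle u\rangle^2\le0$. (b) The cross terms $\langle u\rangle^{p-2}\sigma^{ik}\langle D_iu,\nu^ku\rangle$ and $\langle u\rangle^{p-2}\sigma^{ik}\langle D_iu,g^k\rangle$ from the It\^o correction likewise cannot be Young-absorbed into the $\alpha$-form; they must be integrated by parts, moving $D_i$ onto $\sigma,\nu,g$ and onto $\langle u\rangle^{p-2}$ (which requires an approximation of $\langle u\rangle^{p-2}$ by smooth functions when $p$ is close to $2$), and this is precisely where the $\big(\sum_k\langle Dg^k\rangle^2\big)^{p/2}$ term in \eqref{12.17.3} originates --- a term your estimate states but your argument never produces. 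Only after these manipulations, and the resulting cancellations among the $(p-2)$-terms, do the first-order contributions assemble into $\langle u\rangle^{p-2}\langle u,\beta^iD_iu\rangle$ with $\beta^i=b^i-\sigma^{ik}\nu^k$, to which \eqref{12.19.1} applies; note that $\langle D_iu,\nu^ku\rangle\neq\langle u,\nu^kD_iu\rangle$ since $\nu^k$ is a matrix, so invoking Remark \ref{remark reformulation} does not by itself dispose of the terms actually produced by It\^o's formula. Two smaller points: from \eqref{systemsolution} the flux is $f^{kj}=+a^{ij}D_iu^k$ (with your sign the two displayed quadratic terms are both positive and cannot combine into $-\tfrac12 p\,\alpha$), and $\alpha^{ij}\lambda_i\lambda_j\ge0$ is in fact implied by \eqref{semidefinite 1}, so $\alpha$ is nonnegative definite here.
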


\begin{proof} 
By Lemma \ref{lemma itoformula} (a.s.) $u$ is a 
continuous $L_p(\bR^{d},\bR^{M})$-valued process 
and 
$$
d\int_{\bR^d}\langle u_{t}\rangle^{p}\,dx=
\int_{\bR^d}p\langle u_t\rangle^{p-2}\langle u_t,\sigma^{ik}D_iu_t+
\nu^k_tu_t+g^k_t\rangle\,dx\,dw^k_t
$$
$$
+\int_{\bR^d}\Big(p\langle u_t\rangle^{p-2}
\langle u_t,b^i_t D_iu_t+c_tu_t+f_t-D_ia^{ij}_tD_ju_t\rangle-
p\langle u_t\rangle^{p-2}\langle D_i u_t,a^{ij}_t D_ju_t\rangle
$$
$$
-(1/2)p(p-2)\langle u_t\rangle^{p-4}
D_i\langle u_t\rangle^2\langle u_t,a^{ij}_tD_ju_t\rangle
$$
$$
+\sum_k\big\{(1/2)p(p-2)\langle u_t\rangle^{p-4}
\langle u_t,\sigma^{ik}_tD_i u_t+\nu^k_tu_t+g^k_t\rangle^2
$$
\begin{equation}										\label{12.17.2}
+(1/2)p\langle u_t\rangle^{p-2}\langle \sigma^{ik}_tD_i u_t
+\nu^k_tu_t+g^k_t\rangle^2\big\}\Big)\,dx\,dt.
\end{equation}
 
Observe that
$$
\langle u_{t}\rangle^{p-2}
\langle u_{t},f_{t}\rangle\leq \langle u_{t}\rangle^{p }
+\langle f_{t}\rangle^{p },\quad
\langle u_{t}\rangle^{p-2}
\sum_{k}\langle g^{k}_{t}\rangle^{2 }
\leq\langle u_{t}\rangle^{p }+
\big(\sum_{k}\langle g^{k}_{t}\rangle^{2 }\big)^{p/2},
$$

$$
\langle u_{t}\rangle^{p-2}
\sum_{k}\langle \nu^{k}_{t}u_{t}, g^{k}_{t}\rangle 
\leq N\langle u_{t}\rangle^{p-1}
\big(\sum_{k}\langle g^{k}_{t}\rangle^{2 }\big)^{1/2}
\leq N\langle u_{t}\rangle^{p }+ N
\big(\sum_{k}\langle g^{k}_{t}\rangle^{2 }\big)^{p/2},
$$

$$
\langle u_{t}\rangle^{p-4}\sum_{k}\langle u_{t},
g^{k}_{t}\rangle^{ 2}
\leq\langle u_{t}\rangle^{p-2}
\sum_{k}\langle g^{k}_{t}\rangle^{2 } 
\leq\langle u_{t}\rangle^{p }+
\big(\sum_{k}\langle g^{k}_{t}\rangle^{2 }\big)^{p/2},
$$

$$
\langle u_{t}\rangle^{p-4}\sum_{k}\langle u_{t},
\nu^{k}_{t}u_{t}\rangle
\langle u_{t},
g^{k}_{t}\rangle\leq N\langle u_{t}\rangle^{p-1}
\big(\sum_{k}\langle g^{k}_{t}\rangle^{2 }\big)^{1/2}
\leq\langle u_{t}\rangle^{p }+
\big(\sum_{k}\langle g^{k}_{t}\rangle^{2 }\big)^{p/2}, 
$$

$$
\langle u_t\rangle^{p-2}\langle u_t,c_tu_t\rangle
\leq \langle u_t\rangle^{p-1}\langle c_tu_t\rangle
\leq |c_t|\langle u_t\rangle^{p}, 
$$
where $|c|$ denotes the (Hilbert-Schmidt) norm of $c$.  

This shows how to estimate a few terms on the right
in \eqref{12.17.2}. We write 
$
\xi\sim\eta
$
if $\xi$ and $\eta$ have identical integrals over $\bR^d$  and 
we write $\xi \preceq\eta$ if $\xi\sim\eta+\zeta$ and
the integral of $\zeta$ over $\bR^d$ can be estimated by 
the coefficient of $dt$ in the right-hand side of 
\eqref{12.17.3}. For instance, integrating by parts
and using the smoothness of $\sigma^{ik}_{t}$ 
and $g^{k}_{t}$ we get
\begin{equation}                                                                   \label{11.14.3.14}
p\langle u_{t}\rangle^{p-2}
\langle \sigma^{ik}_{t}D_{i}u_{t},g^{k}_{t}\rangle 
 \preceq  -p\sigma^{ik}_{t}(D_{i}\langle u_{t}\rangle^{p-2})
\langle u_{t},g^{k}_{t}\rangle
\end{equation}
$$
=-p(p-2)\langle u_{t}\rangle^{p-4}
\langle u_{t},\sigma^{ik}_{t}D_{i}u_{t} \rangle 
\langle  u_{t},g^{k}_{t}\rangle ,
$$
where the first expression comes from the  last 
occurence of $g^{k}_{t}$ in \eqref{12.17.2}
and the last one with an opposite sign appears
in the evaluation of the next to last factor of $dt$ 
in \eqref{12.17.2}. 
 Notice, however, that these  
calculations are not justified when 
$p$ is close to $2$, since in this case 
$\langle u_{t}\rangle^{p-2}$ may not be  
absolutely continuous with respect to $x^i$ 
and it is not clear either if $0/0$ should be defined as $0$ 
when it occurs in the second line. For $p=2$ we clearly have 
$
\langle \sigma^{ik}_{t}D_{i}u_{t},g^{k}_{t}\rangle 
 \preceq  0.  
$
For $p>2$ we modify the above calculations by approximating 
the function $\langle t\rangle^{p-2}$, $t\in\bR^M$, by continuously differentiable functions 
$\phi_n(t)=\varphi_n(\langle t\rangle^2)$ 
such that 
$$
\lim_{n\to\infty}\varphi_n(r)=|r|^{(p-2)/2}, 
\quad 
\lim_{n\to\infty}\varphi^{\prime}_n(r)
=(p-2){\rm{sign}}(r)|r|^{(p-4)/2}/2
$$
for all $r\in\bR$, and 
$$
|\varphi_n(r)|\leq N|r|^{(p-2)/2}, 
\quad |\varphi^{\prime}_n(r)|\leq N|r|^{(p-4)/2}
$$ 
for all $r\in\bR$ and integers $n\geq1$, 
where $\varphi^{\prime}_n:=d\varphi_n/dr$ and 
$N$ is a constant independent of $n$. 
Thus 
instead of \eqref{11.14.3.14} 
we have 
\begin{equation}                                                                \label{12.14.3.14}                                                 
p\varphi_n(\langle u_{t}\rangle^2)
\langle \sigma^{ik}_{t}D_iu_{t},g^k_{t}\rangle 
\preceq
-2 p\varphi^{\prime}_n(\langle u_{t}\rangle^2)
\langle u_{t},\sigma^{ik}_{t}D_{i}u_{t} \rangle  
\langle  u_{t},g^{k}_{t}\rangle,  
\end{equation}
where 
\begin{equation}                                                              \label{13.14.3.14} 
|\varphi^{\prime}_n(\langle u_{t}\rangle^2)
\langle u_{t},\sigma^{ik}_{t}D_{i}u_{t} \rangle 
\langle  u_{t},g^{k}_{t}\rangle| 
\leq N\langle u_{t}\rangle^{p-2}
\langle D_{i}u_{t} \rangle 
\langle g^{k}_{t}\rangle 
\end{equation}
with a constant $N$ independent of $n$. 
Letting $n\to\infty$ in \eqref{12.14.3.14} we get 
$$
p\langle u_{t}\rangle^{p-2}
\langle \sigma^{ik}_{t}D_{i}u_{t},g^{k}_{t}\rangle 
 \preceq  -p(p-2)\langle u_{t}\rangle^{p-4}
\langle u_{t},\sigma^{ik}_{t}D_{i}u_{t} \rangle 
\langle  u_{t},g^{k}_{t}\rangle,  
$$ 
where, due to \eqref{13.14.3.14}, $0/0$ means $0$ when it occurs  .

These manipulations allow us to take care of the terms
containing $f$ and $g$ and
 show that to prove the lemma we have to prove that
$$
p( I_{0}+I_{1} +I_{2})+(p/2)I_{3}+[p(p-2)/2]( I_{4}+I_{5} )
$$
\begin{equation}
                                             \label{12.17.4}
\preceq-(p/4)\langle  u_{t}\rangle^{p-2}
\alpha^{ij}_{t}\langle D_{i}u_{t},D_{j}u_{t}\rangle ,
\end{equation}
where
$$
 I_{0}=-\langle u_t\rangle^{p-2}D_ia^{ij}_t\langle
u_t,D_ju_t\rangle ,
\quad  I_{1}=-\langle u_t\rangle^{p-2}a^{ij}_t\langle D_i u_t,
D_ju_t\rangle 
$$
$$
I_{2}=\langle u_{t}\rangle^{p-2}
\langle u_{t},b^{i}_{t}D_{i} u_{t}  \rangle,\quad
I_{3}=\langle u_{t}\rangle^{p-2}
\sum_{k}\langle \sigma^{ik}_{t}D_{i}u_{t}+\nu^{k}_{t}u_{t}
 \rangle^{2},
$$

$$
I_{4}=
\langle u_{t}\rangle^{p-4}\sum_{k}
\langle u_{t},\sigma^{ik}_{t}D_{i}u_{t}+\nu^{k}_{t}u_{t}
 \rangle^{2},\quad  I_5=-\langle u_t\rangle^{p-4}D_i\langle
u_t\rangle^2\langle u_t,a^{ij}_tD_ju_t\rangle 
$$

Observe that

$$
I_{0}=-(1/2)\langle u_t\rangle^{p-2}D_ia^{ij}_tD_j\langle u_t\rangle^2=-(1/p)D_j\langle u_t\rangle^pD_ia^{ij}_t\preceq0,
$$
by the smoothness of $a$.  
Also notice that
$$
I_{3} \preceq   \langle  u_{t}\rangle^{p-2}\sigma^{ik}_{t}
\sigma^{jk}_{t}\langle D_{i}u_{t},D_{j}u_{t}\rangle
+I_{6},
$$
where
$$
I_{6}=2\langle u_{t}\rangle^{p-2}
\sigma^{ik}_{t}\langle D_{i}u_{t},\nu^{k}u_{t}\rangle.
$$

It follows that
 
$$
pI_{1}+(p/2)I_{3} \preceq  -(p/2)\langle  u_{t}\rangle^{p-2}
\alpha^{ij}_{t}\langle D_{i}u_{t},D_{j}u_{t}\rangle+(p/2)I_{6}.
$$
 
Next,
$$
I_{4} \preceq  \langle u_{t}\rangle^{p-4}
\sigma^{ik}_{t}\sigma^{jk}_{t}\langle  u_{t},D_{i}u_{t} \rangle
\langle  u_{t},D_{j}u_{t} \rangle
+2\langle u_{t}\rangle^{p-4}
\sigma^{ik}_{t}\langle u_{t},D_{i}u_{t}\rangle
\langle u_{t},\nu^{k}_{t}u_{t}\rangle
$$

$$
=(1/4)\langle u_{t}\rangle^{p-4}
\sigma^{ik}_{t}\sigma^{jk}_{t}
D_{i}\langle  u_{t} \rangle^{2}
D_{j}\langle  u_{t} \rangle^{2}
+[2/(p-2)](D_{i}\langle  u_{t} \rangle^{p-2})
\sigma^{ik}_{t}\langle u_{t},\nu^{k}_{t}u_{t}\rangle 
$$

$$
\preceq (1/4)\langle u_{t}\rangle^{p-4}
\sigma^{ik}_{t}\sigma^{jk}_{t}
D_{i}\langle  u_{t} \rangle^{2}
D_{j}\langle  u_{t} \rangle^{2}-
[1/(p-2)]I_{6}-[2/(p-2)]I_{7},
$$
where
$$
I_{7}= \langle  u_{t} \rangle^{p-2} 
\sigma^{ik}_{t}\langle u_{t},\nu^{k}_{t}D_{i}u_{t}\rangle.
$$
Hence
$$
pI_{1}+(p/2)I_{3}+[p(p-2)/2](I_{4} +I_{5} )
 \preceq 
-(p/2)\langle  u_{t}\rangle^{p-2}
\alpha^{ij}_{t}\langle D_{i}u_{t},D_{j}u_{t}\rangle
$$

$$
-[p(p-2)/ 8 ]
\langle u_{t}\rangle^{p-4}\alpha^{ij}_{t}
D_{i}\langle  u_{t} \rangle^{2}
D_{j}\langle  u_{t} \rangle^{2}-pI_{7}, 
$$
 and 
$$
I_2-I_7=\langle  u_{t} \rangle^{p-2}( \langle u_{t},b^{i}_{t}
D_{i} u_{t}\rangle
-\sigma^{ik}_{t}\langle u_{t},\nu^{k}_{t}D_{i}u_{t}\rangle)=
\langle  u_{t} \rangle^{p-2}\langle u_{t},\beta^{i}_{t}D_{i} u_{t}\rangle, 
$$
with $\beta^i=b^i-\sigma^{ik}\nu^{k}$. 
It follows by Remark \ref{remark reformulation} 
that the left-hand side of \eqref{12.17.4} is estimated
in the order defined by $\preceq$ by
$$
-(p/2)\langle  u_{t}\rangle^{p-2}
\alpha^{ij}_{t}\langle D_{i}u_{t},D_{j}u_{t}\rangle
$$
 $$
-[p(p-2)/ 8 ]
\langle u_{t}\rangle^{p-4}\alpha^{ij}_{t}
D_{i}\langle  u_{t} \rangle^{2}
D_{j}\langle  u_{t} \rangle^{2}
$$

$$
+K_{0}p
\langle  u_{t}\rangle^{p-2}\big|\sum_{i,j=1}^{d}
\alpha^{ij}_{t}\langle
D_{i}u_{t},D_{j}u_{t}\rangle
\big|^{1/2}\langle u_{t}\rangle +h^{i}D_{i}\langle  u_{t}\rangle^{p}
$$

\begin{equation}
                                                                                                                  \label{12.18.1}
\preceq-(p/4)\langle  u_{t}\rangle^{p-2}
\alpha^{ij}_{t}\langle D_{i}u_{t},D_{j}u_{t}\rangle 
-[p(p-2)/ 8 ]
\langle u_{t}\rangle^{p-4} \alpha ^{ij}_{t}
D_{i}\langle  u_{t} \rangle^{2}
D_{j}\langle  u_{t} \rangle^{2}
\rangle,
\end{equation}
where the last relation follows from the 
elementary  inequality
$ab\leq\varepsilon a^{2}+\varepsilon^{-1}b^{2}$. The lemma is proved.
\end{proof}

\begin{remark}                                \label{remark 1.23.3.14}
In the case that $p=2$ one can replace
condition \eqref{semidefinite 1}
with the following:

  There are  constant $K_{0},N\geq0$ 
such that
for all continuously differentiable $\bR^{M}$-valued
functions $u=u(x)$ with compact support in $\bR^{d}$ 
and all values of the arguments we have
$$                                      
 \int_{\bR^d}\langle  u, \beta^{i}D_{i}u\rangle\,dx
 \leq N\int_{\bR^d}\langle u\rangle^2\,dx
 $$
 \begin{equation}                            \label{2.23.3.14}
+K_{0}\int_{\bR^d}\big
(\big|\sum_{i,j=1}^{d}\alpha^{ij}\langle
D_{i}u,D_{j}u\rangle
\big|^{1/2}\langle u\rangle +h^{i} 
\langle D_{i} u,u \rangle\big)\,dx.
\end{equation}
This condition is weaker than \eqref{semidefinite 1} 
as follows from Remark \ref{remark reformulation}
and still by inspecting the above proof
we get that  $u$ is a continuous $L_2(\bR^{d},\bR^{M})$-valued process, 
and there is a constant $N=N(K,d,M,K_0)$ such that 
\eqref{12.17.3} holds with $p=2$. 
\end{remark}

\begin{remark}                              \label{remark 1.22.3.14}
In the case that $p=2$  and the magnitudes of the
 first derivatives of $b^{i}$ are
bounded by $K$ one can further replace condition \eqref{2.23.3.14}
with a more tractable one, which is Assumption
\ref{assumption antisymmetric}.

  Indeed, for $\varepsilon>0$ 
$$
R:= \langle u,(\beta^i-h^iI_{M})D_{i} u\rangle
=\tfrac{1}{2}\beta^{ikl}D_i(u^ku^l)+\langle u,(\bar{\beta}^i-h^iI_M)D_{i}
u\rangle
$$
$$
\leq \tfrac{1}{2}\beta^{ikl}D_i(u^ku^l)+
\varepsilon\langle(\bar{\beta}^i-h^iI_M)D_{i}
u\rangle^2/2+\varepsilon^{-1}\langle u\rangle^2/2. 
$$ 
Using Assumption \ref{assumption antisymmetric} we  get 

$$ R\leq \tfrac{1}{2}\beta^{ikl}D_i(u^ku^l) +\varepsilon
MK_0\alpha^{ij}\langle D_iu, D_ju\rangle/2+\varepsilon^{-1}\langle
u\rangle^2/2
$$ 
for every $\varepsilon>0$. Hence by integration by parts we have 
$$
\int_{\bR^d}\langle u,\beta^iD_{i} u\rangle\,dx\leq N\int_{\bR^d}\langle u\rangle^2\,dx
+\int_{\bR^d}\langle u,h^iI_{M}D_{i} u\rangle\,dx
$$
$$
+
MK_0\int_{\bR^d}
(\varepsilon/2) \alpha^{ij}\langle D_iu_t, D_ju_t\rangle
+(\varepsilon^{-1}/2)\langle u\rangle^2\,dx. 
$$
Minimising here over $\varepsilon>0$ we get \eqref{2.23.3.14}.  
In that case again $u$ is a continuous $L_2(\bR^{d},\bR^{M})$-valued process, 
and there is a constant $N=N(K,d,M,K_0)$ such that 
\eqref{12.17.3} holds with $p=2$.
\end{remark}

\begin{remark}
                                                    \label{remark 12.19.1}
If $M=1$, then condition \eqref{12.19.1} is 
obviously satisfied 
with $K_{0}=0$ and $h^{i}=b^{i}-\sigma^{ik}\nu^{k}$.

Also note that in the general case, if the coefficients are
smoother, then by {\em formally\/} differentiating
equation \eqref{system} with respect to $x^{i}$ we obtain
a new system of equations for the $M\times d$
matrix-valued function 
$$
v_{t}=(v_{t}^{nm})=Du_{t}=(D_{m}u^{n}_{t}).
$$  
We treat the space of $M\times d$ matrices as a Euclidean 
$Md$-dimensional space, 
the coordinates in which are organized in a special way.
The inner product in this space is then just 
$\langle\langle A,B\rangle\rangle={\rm{tr}}AB^{*}$.
Naturally, linear operators in this space will be given
 by  matrices like $(T^{(nm)(pj)})$, which transforms
an $M\times d$ matrix $(A^{pj})$ into 
an $M\times d$ matrix $(B^{nm})$ by the formula
$$
B^{nm}=\sum_{p=1}^{m}\sum_{j=1}^{d}T^{(nm)(pj)}A^{pj}.
$$

We claim  that the system for $v_{t}$ satisfies
Assumptions \ref{assumption systemcoeff},
\ref{assumption systemdata}, 
and \ref{assumption systemparabolicity} with $m\geq0$
if Assumptions \ref{assumption systemcoeff},
\ref{assumption systemdata}, 
and \ref{assumption systemparabolicity} are satisfied with $m\geq1$.

Indeed, as is easy to see,
$v_{t}$ satisfies \eqref{system} with
the same $\sigma$ and $a$ and with $\tilde b^{i}$, $\tilde c$, $\tilde f$,
$\tilde \nu^{k}$,  $\tilde g^{k}$ in place of 
$  b^{i}$, $  c$, $ f$,
$ \nu^{k}$,  $  g^{k}$, respectively, where
\begin{equation}                                                \label{15.14.4.14}
\tilde{b}^{i(nm)(pj)}=
D_{m}a^{ij}\delta^{pn}+b^{inp}\delta^{jm},\quad
\tilde c^{(nm)(pj)}=c^{np}\delta^{mj}+D_mb^{jnp},
\end{equation}
$$
\tilde{f}^{nm}=
D_{m}f^{ n }+u^{r} D_{m}  c^{nr},\quad
\tilde{\nu}^{k(nm)(pj)}=D_{m}\sigma^{jk}\delta^{np}
+\nu^{knp}\delta^{mj}, 
$$

\begin{equation}                                      \label{3.8.3.14}
\tilde{g}^{knm}=D_{m}g^{kn}+u^{r}D_{m}\nu^{knr}.
\end{equation}
Then the left-hand side of the counterpart of
\eqref{12.19.1} for $v$ is 
$$
\sum_{m=1}^{d}K_{m}+\sum_{n=1}^{M}J_{n},
$$
where (no summation with respect to $m$)
$$
K_{m}=v^{nm}b^{inr}D_{i}v^{rm}-
\sigma^{ik}v^{nm}\nu^{knr}D_{i}v^{rm} 
$$
and (no summation with respect to $n$)
$$
J_{n}=v^{nm}D_{m}a^{ij}D_{i}v^{nj}
-\sigma^{ik}v^{nm}D_{m}\sigma^{jk}D_{i}v^{nj}.
$$
Observe that $D_{i}v^{nj}=D_{ij}u^{n}$ implying  that
$$
\sigma^{ik}D_{m}\sigma^{jk}D_{i}v^{nj}
=(1/2)D_{m}(\sigma^{ik}\sigma^{jk})D_{ij}u^{n},
$$
$$
J_{n}=(1/2)v^{nm}D_{m}\alpha^{ij}D_{ij}u^{n}.
$$
By Lemma \ref{lemma derivative} for any $\varepsilon>0$ 
and $n$ (still no summation with respect to $n$)
$$
J_{n}\leq N\varepsilon^{-1}
\langle\langle v\rangle\rangle^{2}
+\varepsilon\alpha^{ij} D_{ik }u^{n}D_{jk }u^{n},
$$
which along with the fact that
$D_{ik }u^{n}=D_{i}v^{nk}$ yields
$$
\sum_{n=1}^{M}J_{n}\leq
N\varepsilon^{-1}\langle\langle v\rangle\rangle^{2}
 +\varepsilon\alpha^{ij}
\langle\langle D_{i}v,D_{j}v\rangle\rangle.
$$
Upon minimizing with respect to $\varepsilon$ we find
$$
\sum_{n=1}^{M}J_{n}\leq
N\big(\sum_{i,j=1}^{d}\alpha^{ij}
\langle\langle D_{i}v,D_{j}v\rangle\rangle\big)^{1/2}
\langle\langle v\rangle\rangle.
$$

Next, by assumption for any $\varepsilon>0$ and $m$
(still no summation with respect to $m$)
$$
K_{m}\leq N\varepsilon^{-1}
\langle\langle v\rangle\rangle^{2}
+
\varepsilon \alpha^{ij}D_{i}v^{rm}D_{j}v^{rm}
+(1/2)h^{i}D_{i}\sum_{r=1}^{M}(v^{rm})^{2}.
$$
We conclude as above that
$$
\sum_{m=1}^{d}K_{m}\leq
N\big(\sum_{i,j=1}^{d}\alpha^{ij}
\langle\langle D_{i}v,D_{j}v\rangle\rangle\big)^{1/2}
\langle\langle v\rangle\rangle
+h^{i}\langle\langle D_{i}v, v\rangle\rangle 
$$
and this proves our claim.

The above calculations show also that the system for $v_{t}$ satisfies
Assumptions \ref{assumption systemcoeff},
\ref{assumption systemdata}, 
and \ref{assumption antisymmetric} with $m\geq0$
if Assumptions \ref{assumption systemcoeff},
\ref{assumption systemdata}, 
and \ref{assumption antisymmetric} are satisfied with $m\geq1$. 
 (Note that due to 
Assumptions \ref{assumption systemcoeff} with $m\geq1$,  
$\tilde b$, given in \eqref{15.14.4.14}, has first order derivatives in 
$x$, which in magnitude are bounded by a constant.)

 Now higher order derivatives of $u$ are
obviously estimated through lower order ones
on the basis of this
remark without any additional computations. 
However, we still need to be sure that we can
differentiate equation \eqref{system}.  
\end{remark}
 
By the help of the above remarks one can easily estimate the moments of the 
$W^n_p$-norms of $u$ using of the following version of 
Gronwall's lemma.

\begin{lemma}                             \label{lemma 1.6.10.12}
Let $y=(y_t)_{t\in[0,T]}$ 
and $F=(F_t)_{t\in[0,T]}$ 
be  
adapted nonnegative stochastic processes 
and let $m=(m_t)_{t\in[0,T]}$ be a 
continuous local martingale  
such that 
\begin{equation}                                                                    \label{1.7.3.14}
dy_t\leq (Ny_t+F_t)\,dt+dm_t
\quad\text{on $[0,T]$}
\end{equation}
\begin{equation}                                                                       \label{2.8.3.14}
d[m]_t\leq (N y^2_t+y^{2(1-\rho)}_tG^{2\rho}_t)\,dt
\quad\text{on $[0,T]$,}                                       
\end{equation}
with some constants $N\geq0$ and $\rho\in[0,1/2]$, and 
a nonnegative adapted stochastic process 
$G=(G_t)_{t\in[0,T]}$, such that  
$$
\int_0^TG_t\,dt<\infty\,(a.s.),
$$
where $[m]$ is the quadratic variation process for  $m$. 
Then for any $q>0$ 
$$
E\sup_{t\leq T}y_t^q \leq CEy_0^q
+CE\left\{\int_0^{T}(F_t+G_t)\,dt\right\}^{q}   
$$
with a constant $C=C(N,q,\rho,T)$. 
\end{lemma}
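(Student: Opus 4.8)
The plan is a localization-plus-absorption argument: I would cut $[0,T]$ into finitely many short subintervals, on each of which the contribution of the martingale $m$ to $E\sup y^q$ can be made only a small fraction of $E\sup y^q$ itself, and then run a discrete Gronwall iteration across the pieces. \emph{Reductions.} We may assume $m_0=0$. By the usual localization --- stopping at $\tau_R=\inf\{t:y_t\geq R\}\wedge\inf\{t:\int_0^tF_s\,ds\geq R\}\wedge\inf\{t:\int_0^tG_s\,ds\geq R\}\wedge\sigma_R\wedge T$, where $(\sigma_R)$ reduces $m$ --- we may moreover assume that $m$ is a genuine $L_2$-martingale (note that by \eqref{2.8.3.14} and $\rho\leq1/2$ the quantity $[m]_{\tau_R}$ is bounded) and that $y$, $\int_0^\cdot F_s\,ds$, $\int_0^\cdot G_s\,ds$ are bounded; continuity of $m$ shows that $y$ has no positive jumps, so $\sup_{t\leq\tau_R}y_t\leq R$. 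It then suffices to prove the estimate for these stopped quantities with a constant independent of $R$ and let $R\to\infty$, using monotone convergence and $\tau_R\uparrow T$ a.s.

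\emph{One subinterval.} Fix $K\in\mathbb N$, put $\Delta=T/K$, $t_k=k\Delta$ and $S_k=\sup_{t\in[t_k,t_{k+1}]}y_t$. From \eqref{1.7.3.14}, for $t\in[t_k,t_{k+1}]$,
\[
y_t\le y_{t_k}+N\Delta\,S_k+\int_{t_k}^{t_{k+1}}F_s\,ds+m_k^*,\qquad m_k^*:=\sup_{t\in[t_k,t_{k+1}]}|m_t-m_{t_k}|,
\]
so that, taking $K$ with $N\Delta\le1/2$ and using $S_k<\infty$, $S_k\le2(y_{t_k}+\int_{t_k}^{t_{k+1}}F_s\,ds+m_k^*)$. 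By the Burkholder--Davis--Gundy inequality, \eqref{2.8.3.14}, the bound $S_k\ge y_s$ on $[t_k,t_{k+1}]$, and Jensen's inequality (here $2\rho\le1$),
\[
E(m_k^*)^q\le C_qE\Big(N\Delta\,S_k^2+\Delta^{1-2\rho}S_k^{2(1-\rho)}\big(\textstyle\int_{t_k}^{t_{k+1}}G_s\,ds\big)^{2\rho}\Big)^{q/2}.
\]
Young's inequality with exponents $1/(1-\rho)$ and $1/\rho$ bounds the bracket by $\mu S_k^2+C_\mu(\int_{t_k}^{t_{k+1}}G_s\,ds)^2$, with $\mu$ as small as we wish once $\Delta$ and the Young parameter are small. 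Combining and raising to the power $q$,
\[
ES_k^q\le\theta\,ES_k^q+C\big(Ey_{t_k}^q+E(\textstyle\int_{t_k}^{t_{k+1}}F_s\,ds)^q+E(\int_{t_k}^{t_{k+1}}G_s\,ds)^q\big);
\]
now fix $K$ (hence $\Delta$ and $\mu$) so that $\theta\le1/2$. Since $ES_k^q<\infty$, absorption gives $ES_k^q\le C\big(Ey_{t_k}^q+E(\int_{t_k}^{t_{k+1}}(F_s+G_s)\,ds)^q\big)$ with $C=C(N,q,\rho,T)$.

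\emph{Iteration.} As $y_{t_k}\le S_{k-1}$ for $k\ge1$ and $y_{t_0}=y_0$, setting $b_k=ES_{k-1}^q$ and $b_0=Ey_0^q$ the last inequality reads $b_{k+1}\le C(b_k+e_k)$ with $e_k=E(\int_{t_k}^{t_{k+1}}(F_s+G_s)\,ds)^q$; iterating the $K$ steps and using $\sum_k e_k\le C_{K,q}E(\int_0^T(F_s+G_s)\,ds)^q$ (super- or sub-additivity of $t\mapsto t^q$), we obtain $b_k\le C\big(Ey_0^q+E(\int_0^T(F_s+G_s)\,ds)^q\big)$ for all $k$. Since $\sup_{t\le T}y_t\le\sum_{k=0}^{K-1}S_k$, it follows that $E\sup_{t\le T}y_t^q\le C_{K,q}\sum_k b_{k+1}$ is dominated by the right-hand side of the asserted estimate; letting $R\to\infty$ finishes the proof.

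\emph{The main obstacle.} The one real difficulty is circularity: the natural estimate for $E\sup y^q$ passes through the Burkholder--Davis--Gundy inequality for $m$, while \eqref{2.8.3.14} controls $[m]$ essentially by $\int y^2$, so that $E\sup y^q$ reappears on the right with a (BDG) constant that is not small. The two devices that break the loop are (i) working on an interval of length $\Delta$, so that the critical term $\int_{t_k}^{t_{k+1}}Ny_s^2\,ds\le N\Delta\,S_k^2$ carries the small factor $N\Delta$ (the $G$-term being handled by Young's inequality), and (ii) the a priori bound $\sup y\le R$ supplied by localization, which makes the absorption legitimate; everything else is routine.
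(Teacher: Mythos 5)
Your argument is correct: the localization making $m$ a true martingale and $y$ bounded, the splitting of $[0,T]$ into finitely many intervals of length $\Delta$ so that the Burkholder--Davis--Gundy bound $E(m_k^*)^q\le C_qE([m]_{t_{k+1}}-[m]_{t_k})^{q/2}$ can be absorbed (with Jensen/Young handling the $y^{2(1-\rho)}G^{2\rho}$ term, which is exactly the point where this lemma improves Lemma 3.7 of \cite{GS}), and the discrete iteration over the $K$ pieces all go through with constants depending only on $N,q,\rho,T$. This is essentially the same route as the proof the paper delegates to \cite{GS} and \cite{G12}, so nothing further is needed.
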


\begin{proof}
This lemma improves Lemma 3.7 
from \cite{GS}. Its proof goes in the same way as that 
in \cite{GS}, and can be found in \cite{G12}. 
\end{proof}

\begin{lemma}											\label{lemma estimate}
Let $m\geq0$.
Suppose that Assumptions \ref{assumption systemcoeff},
\ref{assumption systemdata}, 
and \ref{assumption systemparabolicity} 
are satisfied
 and assume that  
$u=(u_t)_{t\in[0,T]}$ is a solution of 
\eqref{system}-\eqref{systemini}   
on $[0,T]$ such that (a.s.)
$$
\int_0^T|u_t|^p_{W_p^{m+1}}\,dt<\infty.
$$
Then (a.s.) $u$ is a continuous $W^m_p(\bR^{d},\bR^{M})$-valued process    
and for any $q>0$ 
\begin{equation}                               \label{1.11.3.14}
E\sup_{t\in[0,T]}|u_t|_{W^m_p}^q
\leq N(E|\psi|_{W^m_p}^q+E\mathcal K^q_{m,p}(T)) 
\end{equation}
with a constant $N=N(m,p,q,d,M,K,K_0,T)$. 
If $p=2$ and instead of 
Assumption \ref{assumption systemparabolicity}
Assumption \ref{assumption antisymmetric}
 holds   and (in case $m=0$) the magnitudes of the
 first derivatives of $b^{i}$ are
bounded by $K$,  then 
$u$ is a continuous $W^m_2(\bR^{d},\bR^{M})$-valued process,     
and for any $q>0$ estimate \eqref{1.11.3.14} holds 
(with $p=2$).  
\end{lemma}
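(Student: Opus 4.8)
The plan is to derive, for $y_t:=|u_t|^p_{W^m_p}=\sum_{|\alpha|\le m}|D^\alpha u_t|^p_{L_p}$, the two differential inequalities required by Lemma~\ref{lemma 1.6.10.12} and then to read off \eqref{1.11.3.14} from that lemma with $q/p$ in place of $q$; continuity will come out along the way. To reach the top-order derivatives I would use Remark~\ref{remark 12.19.1}: since $\int_0^T|u_t|^p_{W^{m+1}_p}\,dt<\infty$ (a.s.), one may differentiate the weak formulation \eqref{systemsolution} up to $m$ times (replacing $\varphi$ by $(-1)^{|\alpha|}D^\alpha\varphi$ and shifting the extra derivatives onto the coefficients and free terms), so that for every multi-index $\alpha$ with $|\alpha|\le m$ the function $U^{(\alpha)}:=D^\alpha u$ solves a system of the form \eqref{system} with the same $a$ and $\sigma$; iterating Remark~\ref{remark 12.19.1}, its coefficients satisfy Assumptions~\ref{assumption systemcoeff}, \ref{assumption systemdata} and \ref{assumption systemparabolicity} (in the second part of the lemma, \ref{assumption antisymmetric}) with $m$ replaced by $m-|\alpha|\ge0$, and by \eqref{15.14.4.14}--\eqref{3.8.3.14} its free data $f^{(\alpha)},g^{(\alpha)}$ are built from derivatives of $f$, $g$ and of $u$ so that
$$
|f^{(\alpha)}_t|_{W^{m-|\alpha|}_p}+|g^{(\alpha)}_t|_{W^{m-|\alpha|+1}_p}\le N\big(|f_t|_{W^m_p}+|g_t|_{W^{m+1}_p}+|u_t|_{W^m_p}\big).
$$
In particular this data fulfils Assumption~\ref{assumption systemdata} at level $m-|\alpha|$ thanks to the a priori integrability of $u$, and $\int_0^T|U^{(\alpha)}_t|^p_{W^1_p}\,dt\le\int_0^T|u_t|^p_{W^{m+1}_p}\,dt<\infty$, so each $U^{(\alpha)}$ is a solution of its own system in the sense of the lemma. (For $m=0$ only $\alpha=0$ occurs and no differentiation is needed.)

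Next I would apply Lemma~\ref{lemma 12.17.1} to each $U^{(\alpha)}$, $|\alpha|\le m$ --- and in the second part, where $p=2$, Remark~\ref{remark 1.22.3.14} instead; this is the only point at which the bound on $|Db^i|$ is used, and for $|\alpha|\ge1$ it holds automatically because the diffusion coefficient $a$ is unchanged under differentiation and has two bounded derivatives (cf.\ Remark~\ref{remark 12.19.1}), which is why that extra hypothesis is imposed separately only when $m=0$. Dropping in \eqref{12.17.3} the nonnegative term containing $\alpha^{ij}$ and summing over $|\alpha|\le m$, using $\sum_{|\alpha|\le m}|U^{(\alpha)}_t|^p_{L_p}=y_t$ and the data bound of the previous paragraph --- so that the only $u$-dependent contribution, of total size $\le N|u_t|^p_{W^m_p}=Ny_t$, is absorbed into the $Ny_t$ term --- one gets
$$
dy_t\le(Ny_t+F_t)\,dt+dm_t,\qquad m_t:=\sum_{|\alpha|\le m}m^{(\alpha)}_t,\qquad F_t:=N\big(|f_t|^p_{W^m_p}+|g_t|^p_{W^{m+1}_p}\big),
$$
where $m^{(\alpha)}$ is the continuous local martingale in \eqref{12.17.3} and $\int_0^TF_t\,dt\le N\mathcal K^p_{m,p}(T)$; this is \eqref{1.7.3.14}.

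The crucial step, and the one at which the proofs in \cite{KR,GK2003} had a gap, is \eqref{2.8.3.14}. The integrand of $m^{(\alpha)}$ against $dw^k_t$ is $p\int_{\bR^d}\langle U^{(\alpha)}_t\rangle^{p-2}\langle U^{(\alpha)}_t,\sigma^{ik}_tD_iU^{(\alpha)}_t+\nu^{(\alpha)k}_tU^{(\alpha)}_t+g^{(\alpha)k}_t\rangle\,dx$. In the $\sigma^{ik}_tD_iU^{(\alpha)}_t$ term I would integrate by parts, using that for $p\ge2$ the chain rule gives $D_i\langle U^{(\alpha)}_t\rangle^p=p\langle U^{(\alpha)}_t\rangle^{p-2}\langle U^{(\alpha)}_t,D_iU^{(\alpha)}_t\rangle\in L_1$, so this term equals $-p^{-1}\int(D_i\sigma^{ik}_t)\langle U^{(\alpha)}_t\rangle^p\,dx$. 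Squaring the integrand, summing over $k$, and applying the Cauchy--Schwarz inequality in $x$ to the $\sigma$- and $\nu$-parts (with the $l_2$-boundedness of the first $x$-derivatives of $\sigma$ and of $\nu$) and Cauchy--Schwarz followed by H\"older to the $g$-part, the quadratic variation density of $m^{(\alpha)}$ is at most $N|U^{(\alpha)}_t|^{2p}_{L_p}+N|U^{(\alpha)}_t|^{2(p-1)}_{L_p}|g^{(\alpha)}_t|^2_{L_p(l_2)}$. Summing over $|\alpha|\le m$, using $|U^{(\alpha)}_t|^p_{L_p}\le y_t$ and $\sum_{|\alpha|\le m}|g^{(\alpha)}_t|^2_{L_p(l_2)}\le N(|g_t|^2_{W^{m+1}_p}+|u_t|^2_{W^m_p})$ with $|u_t|^2_{W^m_p}=y_t^{2/p}$, one obtains
$$
d[m]_t\le N\Big(y_t^2+y_t^{2(1-1/p)}\big(|g_t|^p_{W^{m+1}_p}\big)^{2/p}\Big)\,dt,
$$
which is \eqref{2.8.3.14} with $\rho:=1/p\in(0,1/2]$ and $G_t:=N|g_t|^p_{W^{m+1}_p}$, for which $\int_0^TG_t\,dt\le N\mathcal K^p_{m,p}(T)<\infty$ (a.s.).

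Lemma~\ref{lemma 1.6.10.12} with these $y,F,G,m$ and with $q/p$ in place of $q$ then yields $E\sup_{t\le T}y_t^{q/p}\le CEy_0^{q/p}+CE\big(\int_0^T(F_t+G_t)\,dt\big)^{q/p}$, which is \eqref{1.11.3.14} since $y_0=|\psi|^p_{W^m_p}$ and $\int_0^T(F_t+G_t)\,dt\le N\mathcal K^p_{m,p}(T)$. Continuity comes for free: Lemma~\ref{lemma itoformula}, invoked inside Lemma~\ref{lemma 12.17.1} (resp.\ Remark~\ref{remark 1.22.3.14}), exhibits each $U^{(\alpha)}$, $|\alpha|\le m$, as a continuous $L_p(\bR^d,\bR^M)$-valued process, so $u$ is a continuous $W^m_p(\bR^d,\bR^M)$-valued process; the case $p=2$ under Assumption~\ref{assumption antisymmetric} goes the same way, with Remark~\ref{remark 1.22.3.14} replacing Lemma~\ref{lemma 12.17.1} and the Assumption~\ref{assumption antisymmetric} form of Remark~\ref{remark 12.19.1} used throughout. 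I expect the main obstacle to be bookkeeping rather than analysis: making sure the differentiated systems genuinely satisfy the hypotheses of Lemma~\ref{lemma 12.17.1} at level $m-|\alpha|$ (which is exactly Remark~\ref{remark 12.19.1}), and organising the two summations over $\alpha$ so that every $u$-term is absorbed into the $Ny_t$ or $Ny_t^2$ slot while $[m]$ takes precisely the form \eqref{2.8.3.14} with an admissible $\rho=1/p\le1/2$ and an (a.s.)-integrable $G$.
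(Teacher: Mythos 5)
Your argument is correct and rests on exactly the same three pillars as the paper's proof: Remark \ref{remark 12.19.1} (the differentiated system has the same $a,\sigma$ and inherits Assumptions \ref{assumption systemcoeff}--\ref{assumption systemparabolicity}, resp.\ \ref{assumption antisymmetric}, one level down), Lemma \ref{lemma 12.17.1} (resp.\ Remark \ref{remark 1.22.3.14} for $p=2$), and the stochastic Gronwall Lemma \ref{lemma 1.6.10.12} with $\rho=1/p$, including the same integration by parts in the $\sigma$-part of the quadratic variation and the same H\"older treatment of the $g$-part. The difference is organizational: the paper argues by induction on $m$, running the It\^o/Gronwall machinery only at order zero and then invoking the induction hypothesis both to get the estimate for $v=Du$ and to bound $E\tilde{\mathcal K}^q_{m-1,p}(T)$ (which contains $u$ through $\tilde f,\tilde g$ in \eqref{15.14.4.14}--\eqref{3.8.3.14}) by $E|\psi|^q_{W^m_p}+E\mathcal K^q_{m,p}(T)$; you instead sum the inequalities of Lemma \ref{lemma 12.17.1} over all $|\alpha|\le m$ and absorb the $u$-dependent parts of $f^{(\alpha)},g^{(\alpha)}$ directly into the $Ny_t$ drift and $Ny_t^2$ quadratic-variation slots, applying Lemma \ref{lemma 1.6.10.12} once to $y_t=|u_t|^p_{W^m_p}$. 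Your route needs the explicit bound $|f^{(\alpha)}_t|_{L_p}+|g^{(\alpha)}_t|_{W^1_p}\le N(|f_t|_{W^m_p}+|g_t|_{W^{m+1}_p}+|u_t|_{W^m_p})$ (which does hold, since only derivatives of $u$ up to order $m$ and of the coefficients up to order $m+1$ enter), whereas the induction defers all such bookkeeping to the lower level; both are legitimate, and the hypotheses $\int_0^T|u_t|^p_{W^{m+1}_p}\,dt<\infty$ justifies the repeated differentiation of the weak formulation in either version.

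Two small points. First, in the $p=2$ part your parenthetical reason why the boundedness of $Db^i$ is automatic for $|\alpha|\ge1$ is incomplete: the first derivatives of the transformed drift $\tilde b$ in \eqref{15.14.4.14} involve $D^2a$ \emph{and} $Db$, so what you really use is that Assumption \ref{assumption systemcoeff} with $m\ge1$ already bounds $Db$ (this is exactly the parenthetical note in Remark \ref{remark 12.19.1}); the extra hypothesis is thus needed only when $m=0$, as you concluded. Second, when summing the martingales you should note $[m]_t\le N(m,d)\sum_{|\alpha|\le m}[m^{(\alpha)}]_t$ (Cauchy--Schwarz for the brackets) before inserting your per-$\alpha$ bounds; this only changes constants and does not affect the form \eqref{2.8.3.14}.
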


\begin{proof}
We are going to prove   the lemma by induction 
on $m$. First let $m=0$ and denote $y_t:=|u_t|^p_{L_p}$. Then by virtue of 
Remark \ref{remark 1.22.3.14}  and Lemma \ref{lemma 12.17.1},  
the process 
$y=(y_t)_{t\in[0,T]}$ is an adapted $L_p$-valued continuous process, 
and 
\eqref{1.7.3.14} holds with  
$$
F_t:=\int_{\bR^{d}}
\big [
\langle f_{t}\rangle^{p} 
+\big(\sum_{k}\langle g^{k}_{t}\rangle^{2 }\big)^{p/2}
 +\big(\sum_{k}\langle D g^{k}_{t}\rangle^{2 }
\big)^{p/2} \big]\,dx, 
$$
$$
m_t:=p\int_0^t\int_{\bR^{d}}\langle u_{s}\rangle^{p-2} 
\langle u_{s},\sigma^{ik}_sD_{i}u_{s}+
\nu^{k}_{s}u_{s}+g^{k}_{s}\rangle
\,dx\,dw^{k}_{s}. 
$$
Notice that 
$$
d[m_t]=p^2\sum_{r=1}^{\infty}\left(\int_{\bR^{d}}\langle u_{t}\rangle^{p-2} 
\langle u_{t},\sigma^{ir}_tD_{i}u_{t}+
\nu^{r}_{t}u_{t}+g^{r}_{t}\rangle
\,dx\right)^2\,dt. 
$$
$$
\leq 3p^2(A_t+B_t+C_t)\,dt, 
$$
with 
$$
A_t=\sum_{r=1}^{\infty}\left(p\int_{\bR^{d}}\langle u_{t}\rangle^{p-2} 
\sigma^{ir}_t\langle u_{t},D_{i}u_{t}
\rangle\,dx\right)^2
=\sum_{r=1}^{\infty}\left(\int_{\bR^{d}}\sigma^{ir}_tD_i\langle u_{t}\rangle^{p} 
\,dx\right)^2, 
$$
$$
B_t=\sum_{r=1}^{\infty}\left(\int_{\bR^{d}}\langle u_{t}\rangle^{p-2} 
\langle u_{t},
\nu^{r}_{t}u_{t}\rangle
\,dx\right)^2,
\quad 
C_t=\sum_{r=1}^{\infty}\left(\int_{\bR^{d}}\langle u_{t}\rangle^{p-2} 
\langle u_{t},g^{r}_{t}\rangle
\,dx\right)^2. 
$$
Integrating by parts and then using Minkowski's inequality,  
due to   Assumption \ref{assumption regularitycoeff}, we get  
$A_t\leq N y_t^2$ with a constant $N=N(K,M,d)$. 
Using Minkowski's inequality and taking into account that 
$$
\sum_{r=1}^{\infty}\langle u,\nu^{r}u\rangle^2\leq 
\langle u\rangle^4\sum_{r=1}^{\infty}|\nu^r|^2
\leq N\langle u\rangle^4, 
\quad 
\sum_{r=1}^{\infty}\langle u,g^{r}\rangle^2
\leq \langle u\rangle ^2|g|, 
$$
we obtain 
$$
B_t\leq Ny_t^2, \quad C_t\leq \left(\int_{\bR^d}\langle u_t\rangle^{p-1}|g_t|\,dx\right)^2
\leq |y_t|^{2(p-1)/p}|g_t|_{L_p}^{2}. 
$$
Consequently, condition \eqref{2.8.3.14} holds with $G_t=|g_t|^p_{L_p}$, $\rho=1/p$, 
and 
we get \eqref{1.11.3.14} with $m=0$ by applying Lemma \ref{lemma 1.6.10.12}. 

Let $m\geq1$ and assume that the assertions of the lemma are valid 
 for $m-1$, in place of $m$, 
for any $M\geq1$, $p\geq2$ and $q>0$, for any $u$, $\psi$, $f$ and $g$ 
satisfying the assumptions  
with $m-1$ in place of $m$. Recall the notation $v=(v^{nl}_t)=(D_lu^n_t)$
from  Remark \ref{remark 12.19.1}, and that $v_{t}$ satisfies
\eqref{system} with
the same $\sigma$ and $a$ and with $\tilde b^{i}$, $\tilde c$, $\tilde f$,
$\tilde \nu^{k}$,  $\tilde g^{k}$ in place of 
$  b^{i}$, $  c$, $ f$,
$ \nu^{k}$,  $  g^{k}$, respectively.  
By virtue of Remarks \ref{remark 12.19.1} and \ref{remark 1.22.3.14}  the
system for $v=(v_t)_{t\in[0,T]}$  satisfies Assumption \ref{assumption
systemparabolicity},   and it is easy to see that it satisfies also 
Assumptions \ref{assumption systemcoeff} and
\ref{assumption systemdata} with $m-1$ in place of $m$. Hence 
by the induction hypothesis $v$ is a continuous 
$W^{m-1}_p(\bR^{d},\bR^{M})$-valued adapted process, and  we have 
\begin{equation}                                   \label{1.12.3.14}
E\sup_{t\in[0,T]}|v_t|_{W^{m-1}_p}^q
\leq N(E|{\tilde \psi}|_{W^{m-1}_p}^q
+E\tilde{\mathcal K}^q_{m-1,p}(T)) 
\end{equation}
with a constant $N=N(T,K,K_0,M,d,p,q)$, where ${\tilde\psi}^{nl}=D_{l}\psi^n$, 
$$
\tilde{\mathcal K}^p_{m-1,p}(T)
:=\int_0^T(|\tilde f_t|^p_{W^{m-1}_p}+|\tilde g_t|^p_{W^{m}_p})\,dt. 
$$
It follows that
 $(u_t)_{t\in[0,T]}$ is a $W^m_p(\bR^{d},\bR^{M})$-valued 
continuous adapted process,  and by using the induction hypothesis it is easy to see 
that 
$$
E\tilde{\mathcal K}^q_{m-1,p}(T))\leq 
N(E|\psi|^q_{W^m_p}+E{\mathcal K}^q_{m,p}(T)).  
$$ 
Thus \eqref{1.11.3.14} follows. 

If $p=2$ and Assumption \ref{assumption systemparabolicity}  is replaced
with
 Assumptions \ref{assumption antisymmetric},
 then the proof of the conclusion of the lemma 
goes in the same way  with obvious changes. The proof is complete. 
\end{proof}

\mysection{Proof of Theorems \ref{theorem systemmain} and \ref{theorem antisymmetric}}

First we prove uniqueness.
Let $u^{(1)}$ and $u^{(2)}$ be 
 solutions 
to \eqref{system}-\eqref{systemini}, 
and let Assumptions  \ref{assumption systemcoeff}, \ref{assumption systemdata} 
and \ref{assumption systemparabolicity} hold with $m=0$. 
Then $u:=u^{(1)}-u^{(2)}$ solves 
\eqref{system} with $u_0=0$, $g=0$ and $f=0$ and 
Lemma \ref{lemma 12.17.1} and Remark \ref{remark 1.22.3.14}
are applicable to $u$.
Then using It\^o's formula 
for transforming
 $|u_t|^p_{L_p}\exp(-\lambda t)$ with a 
sufficiently large constant $\lambda$, 
 after simple calculations we get that almost surely 
$$
0\leq e^{-\lambda t}|u_t|^p_{L_p}\leq m_t\quad\text{for all $t\in[0,T]$},  
$$
where $m:=(m_t)_{t\in[0,T]}$ is a continuous local martingale starting from $0$.  
Hence 
almost surely $m_t=0$ for all $t$, and it follows that almost surely $u_t(x)=v_t(x)$ 
for all $t$ and almost every 
$x\in\bR^d$. If 
$p=2$ 
 and  Assumptions  \ref{assumption systemcoeff}, \ref{assumption systemdata} 
and \ref{assumption antisymmetric} hold and
the magnitudes of the
 first derivatives of $b^{i}$ are
bounded by $K$
 and
$u^{(1)}$ and $u^{(2)}$ are  
solutions,  then we can repeat  
the above argument with $p=2$ to get $u^{(1)}=u^{(2)}$.  
Thus we have proved uniqueness under weaker conditions than
the ones imposed in 
Theorems \ref{theorem systemmain} and \ref{theorem antisymmetric}.

To show the existence of  solutions we approximate the data of system \eqref{system} 
with smooth ones, satisfying also the strong stochastic parabolicity, Assumption 
\ref{assumption strongparabolicity}. To this end we will use the approximation described 
in the following lemma. 

\begin{lemma}                                                       \label{lemma smoothing}
Let Assumptions \ref{assumption systemcoeff} and
\ref{assumption systemparabolicity} 
(\ref{assumption antisymmetric}, respectively) hold with $m\geq1$. 
Then for every $\varepsilon\in(0,1)$ 
there exist $\cP\otimes\cB(\bR^d)$-measurable smooth (in $x$) functions 
$a^{\varepsilon ij}$, 
$b^{(\varepsilon)i}$, $c^{(\varepsilon)}$, 
$\sigma^{(\varepsilon)i }$, 
$\nu^{(\varepsilon) },D_ka^{ \varepsilon ij}$ and 
$h^{(\varepsilon)i}$,  
satisfying the 
following conditions for every $i,j,k=1,...,d$.
\begin{enumerate}
\item[(i)] There is a constant $N=N(K)$ such that  
$$
|a^{ \varepsilon ij}-a^{ij}|+|b^{(\varepsilon)i}-b^{i}|+|c^{(\varepsilon)}-c|
+|D_ka^{ \varepsilon ij}-D_ka^{ij}|\leq N\varepsilon,
$$
$$
|\sigma^{(\varepsilon)i }-
\sigma^{i }|+
|\nu^{(\varepsilon) }-\nu |\leq N\varepsilon
$$ 
for all $(\omega,t,x)$ and $i,j,k=1,...,d$. 
\item[(ii)] For every integer $n\geq0$ the partial derivatives in $x$ 
of $a^{ \varepsilon ij}$, $b^{(\varepsilon)i}$, $c^{(\varepsilon)}$, 
$\sigma^{(\varepsilon)i}$ and 
$\nu^{(\varepsilon)}$ 
 up to order $n$ are $\cP\otimes\cB(\bR^d)$-measurable functions, 
in magnitude bounded by a constant. For $n=m$ this constant is independent 
of $\varepsilon$, it depends only on $m$, $M$, $d$ and $K$; 
\item[(iii)] For the matrix 
$\alpha^{ \varepsilon ij}
:=2a^{ \varepsilon ij}-\sigma^{(\varepsilon)ik}\sigma^{(\varepsilon)jk}
$ 
we have 
$$
\alpha^{ \varepsilon ij}\lambda^{i}\lambda^{j}\geq \varepsilon\sum_{i=1}^d|\lambda^i|^2
\quad\text{for all $\lambda=(\lambda^1,...,\lambda^d)\in\bR^d$};
$$
\item[(iv)] Assumption \ref{assumption systemparabolicity} 
(\ref{assumption antisymmetric}, respectively)
holds for the functions 
$\alpha^{ \varepsilon ij}$, 
$\beta^{ \varepsilon i}
:=b^{(\varepsilon)i}-\sigma^{(\varepsilon)ik}\nu^
{(\varepsilon)k}$ 
and $h^{(\varepsilon)i}$  
in place of $\alpha^{ij}$,  
$\beta^i$ and $h^i$,  respectively,  with 
the same constant $K_0$.  
\end{enumerate}
\end{lemma}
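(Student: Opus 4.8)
The plan is to obtain all the smoothed objects by a single mollification in the space variable, applied simultaneously to the drift/diffusion coefficients and — crucially — to the first-order derivatives $D_ka^{ij}$ as if they were independent functions, and then to repair the stochastic parabolicity by adding a small multiple of the identity to $a$. Fix a standard nonnegative mollifier $\zeta\in C_0^\infty(\bR^d)$ with $\int\zeta=1$, set $\zeta_\delta(x)=\delta^{-d}\zeta(x/\delta)$, and convolve in $x$: write $a^{ij}*\zeta_\delta$, $b^i*\zeta_\delta$, $c*\zeta_\delta$, $\sigma^i*\zeta_\delta$, $\nu*\zeta_\delta$, $h^i*\zeta_\delta$, and separately $(D_ka^{ij})*\zeta_\delta$. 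Note that for $a^{ij}$ we keep both the mollified function and its own genuine derivative $D_k(a^{ij}*\zeta_\delta)=(D_ka^{ij})*\zeta_\delta$, so no ambiguity arises; the reason for listing $D_ka^{\varepsilon ij}$ as a separate item in the statement is that after the parabolicity correction below we will further perturb $a$, and the bookkeeping must track the derivative explicitly. First I would record the elementary convolution facts: since $a,b,c$ and their relevant derivatives, and $\sigma,\nu,h$ and their derivatives, are all bounded by $K$ and (for $a$) have bounded derivatives up to order $2$ (resp.\ $m$ for $b,c$, $m+1$ for $\sigma,\nu$), mollification preserves the $\cP\otimes\cB(\bR^d)$-measurability, does not increase the sup-norm of any derivative that already exists, and produces functions that are $C^\infty$ in $x$ with all derivatives bounded (for each fixed $\delta$). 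Moreover, by the standard estimate $\|f*\zeta_\delta-f\|_\infty\le \delta\,\|Df\|_\infty$ for Lipschitz $f$, the differences in (i) are $O(\delta)$; choosing $\delta=\delta(\varepsilon)=c\varepsilon$ with a small absolute constant $c$ gives item (i), and item (ii) with the $n=m$ bound independent of $\varepsilon$ because for $n\le m$ the mollified derivative is a convolution of an $L^\infty$-function of norm $\le K$ with $\zeta_\delta$, hence bounded by the \emph{same} constant (depending only on $m,M,d,K$), uniformly in $\varepsilon$.

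Next I would handle (iii). After mollification set $\tilde a^{ij}:=a^{ij}*\zeta_\delta$, $\tilde\sigma^{ik}:=\sigma^{ik}*\zeta_\delta$, and define the corrected diffusion matrix
$$
a^{\varepsilon ij}:=\tilde a^{ij}+\tfrac12\,\varepsilon\,\delta^{ij}.
$$
Then $D_ka^{\varepsilon ij}=(D_ka^{ij})*\zeta_\delta$ is exactly the separately-listed object, so (i) and (ii) are unaffected by the correction up to an additive $O(\varepsilon)$ already absorbed. Now
$$
\alpha^{\varepsilon ij}\lambda^i\lambda^j
=2a^{\varepsilon ij}\lambda^i\lambda^j-\tilde\sigma^{ik}\tilde\sigma^{jk}\lambda^i\lambda^j
=\big(2\tilde a^{ij}-\tilde\sigma^{ik}\tilde\sigma^{jk}\big)\lambda^i\lambda^j+\varepsilon|\lambda|^2 .
$$
The point is that the bracketed quadratic form is nonnegative: because $\zeta_\delta\ge0$ and $\int\zeta_\delta=1$, Jensen's inequality applied coordinatewise gives, for every $\lambda$,
$$
\tilde\sigma^{ik}(x)\tilde\sigma^{jk}(x)\lambda^i\lambda^j
=\sum_k\Big(\int \sigma^{ik}(x-y)\lambda^i\,\zeta_\delta(y)\,dy\Big)^2
\le \int \Big(\sum_k(\sigma^{ik}(x-y)\lambda^i)^2\Big)\zeta_\delta(y)\,dy
=\big(\sigma^{ik}\sigma^{jk}*\zeta_\delta\big)(x)\lambda^i\lambda^j,
$$
while $2\tilde a^{ij}\lambda^i\lambda^j=(2a^{ij}\lambda^i\lambda^j)*\zeta_\delta$; subtracting, $(2\tilde a-\tilde\sigma\tilde\sigma^{\!\top})\lambda\lambda\ge (\alpha^{ij}\lambda^i\lambda^j)*\zeta_\delta\ge0$ by Assumption \ref{assumption systemparabolicity} (resp.\ \ref{assumption antisymmetric}), since $\alpha\ge0$ holds pointwise under both. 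This yields (iii) with exactly the constant $\varepsilon$.

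Finally, (iv) is the one step requiring genuine care, and it is the main obstacle: mollification does not commute with the nonlinear relation \eqref{semidefinite 1}, so one cannot simply convolve and hope the inequality survives. The remedy is to use the reformulation in Remark \ref{remark reformulation}, namely \eqref{4.12.3}: for the unmollified data, $\hat\beta^i:=\beta^i-h^iI_M$ satisfies $\langle\sum_i\hat\beta^i\xi_i\rangle^2\le K_0^2\,\alpha^{ij}\xi_i\xi_j$ for every $M\times d$ matrix $(\xi_i^{\,l})$ and at every $(\omega,t,x)$. Set $\beta^{\varepsilon i}:=b^{(\varepsilon)i}-\sigma^{(\varepsilon)ik}\nu^{(\varepsilon)k}$ and $\hat\beta^{\varepsilon i}:=\beta^{\varepsilon i}-h^{(\varepsilon)i}I_M$; the difficulty is that $\beta^{\varepsilon i}\ne\beta^i*\zeta_\delta$ because of the product $\sigma\nu$. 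I would split
$$
\hat\beta^{\varepsilon i}=\big(\hat\beta^i*\zeta_\delta\big)+\big(\sigma^{ik}\nu^k*\zeta_\delta-\sigma^{(\varepsilon)ik}\nu^{(\varepsilon)k}\big),
$$
and treat the two pieces separately using $|a+b|^2\le(1+\eta)|a|^2+(1+\eta^{-1})|b|^2$. For the first piece, Jensen again: $\langle\sum_i(\hat\beta^i*\zeta_\delta)\xi_i\rangle^2\le \big(\langle\sum_i\hat\beta^i\xi_i\rangle^2\big)*\zeta_\delta\le K_0^2(\alpha^{ij}\xi_i\xi_j)*\zeta_\delta\le K_0^2\,\alpha^{\varepsilon ij}\xi_i\xi_j$, where the last step uses the nonnegativity computation from (iii), namely $(\alpha^{ij}\xi_i\xi_j)*\zeta_\delta\le (2\tilde a-\tilde\sigma\tilde\sigma^{\!\top})\xi\xi\le \alpha^{\varepsilon ij}\xi_i\xi_j$. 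For the second piece, the difference $\sigma^{ik}\nu^k*\zeta_\delta-\sigma^{(\varepsilon)ik}\nu^{(\varepsilon)k}$ is $O(\delta)$ in sup-norm (a bounded-times-Lipschitz product estimate, using that $\sigma,\nu$ and their first derivatives are bounded by $K$), hence $\langle\sum_i(\cdots)\xi_i\rangle^2\le N\delta^2|\xi|^2\le N\delta^2\varepsilon^{-1}\,\alpha^{\varepsilon ij}\xi_i\xi_j$ by (iii); choosing $\delta\le c\varepsilon^{3/2}$ (still compatible with (i), which only needs $\delta=O(\varepsilon)$) makes this $\le \eta\,\alpha^{\varepsilon ij}\xi_i\xi_j$ for any prescribed small $\eta$. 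Combining, $\langle\sum_i\hat\beta^{\varepsilon i}\xi_i\rangle^2\le (1+\eta)K_0^2(1+\text{small})\,\alpha^{\varepsilon ij}\xi_i\xi_j$, which after specializing $\xi_i^{\,l}=\lambda_i\delta^{kl}$ is \eqref{semidefinite 1} for the smoothed data. To land on \emph{the same} constant $K_0$ as claimed, I would additionally observe that the $O(\delta)$ error can be absorbed into the $h$-term rather than the $\alpha$-term: replace $h^{(\varepsilon)i}$ by $h^i*\zeta_\delta$ plus a correction bounded by $N\delta$, which keeps \eqref{h} with a constant $N(K)$ (not necessarily $K$, but item (i) already allows the $h$-data to drift by $O(\varepsilon)$ — if strict preservation of the bound $K$ in \eqref{h} is wanted one shrinks the original $K_0$ by an $\varepsilon$-independent factor once at the outset). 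The antisymmetric case \eqref{semidefinite 2} is identical after replacing $\beta^i$ by its antisymmetric part $\bar\beta^i$ throughout, since antisymmetrization commutes with convolution in $x$. This completes the verification of (i)–(iv), and the lemma follows.
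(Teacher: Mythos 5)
Your construction is essentially the paper's: mollify all coefficients in $x$ and restore nondegeneracy by adding a small multiple of $\varepsilon$ times the identity to $a$; items (i)--(ii) are handled exactly as in the paper, and your Jensen-type observation $\tilde\sigma^{ik}\tilde\sigma^{jk}\lambda_i\lambda_j\le\big((\sigma^{ik}\sigma^{jk}\lambda_i\lambda_j)*\zeta_\delta\big)$, which gives $2\tilde a-\tilde\sigma\tilde\sigma^{*}\ge\alpha*\zeta_\delta\ge0$ and hence (iii) with the bare addition of $\tfrac12\varepsilon I_d$, is a clean alternative to the paper's cruder bound $|\sigma^{(\varepsilon)ik}\sigma^{(\varepsilon)jk}-\sigma^{ik}\sigma^{jk}|\le2K^{2}\varepsilon$, which the paper compensates by adding $k\varepsilon\delta_{ij}$ with a large constant $k=k(K_0,K,M,d)$.

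The gap is in (iv), precisely in the clause ``with the same constant $K_0$''. Your argument loses the constant in two places and the proposed repair does not restore it. First, you pass to the matrix form \eqref{4.12.3}: but Assumption \ref{assumption systemparabolicity} gives \eqref{semidefinite 1} with $K_0$, and the passage to \eqref{4.12.3} and back (Remark \ref{remark reformulation}) changes the constant by $M$-dependent factors, so specializing $\xi^l_i=\lambda_i\delta^{kl}$ at the end does not return $K_0$. Second, the splitting $|a+b|^2\le(1+\eta)|a|^2+(1+\eta^{-1})|b|^2$ leaves a multiplicative $(1+\eta)$ on the main term for every fixed $\eta>0$. Your fix --- absorbing the error ``into the $h$-term'' --- cannot work as stated: the error matrices $E^i=(\sigma^{ik}\nu^{k})*\zeta_\delta-\sigma^{(\varepsilon)ik}\nu^{(\varepsilon)k}$ are not multiples of $I_M$, so they are not a perturbation of $h^iI_M$; and ``shrinking the original $K_0$ at the outset'' is not available, since $K_0$ is the constant furnished by the hypothesis and the conclusion must hold with that very constant (note also that no correction of $h$ is needed for \eqref{h}: $h^i*\zeta_\delta$ satisfies it automatically). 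The repair is short and is exactly what the paper does: argue componentwise with \eqref{semidefinite 1} for each fixed pair $k,l$ (so no $M$-dependent constants enter), avoid the $(1+\eta)$-splitting by estimating a difference of squares, $\big||\sum_i\hat\beta^{\varepsilon ikl}\lambda_i|^2-|\sum_i\hat\beta^{ikl}\lambda_i|^2\big|\le N(K,M,d)\,\varepsilon|\lambda|^2$ (all smoothed coefficients differ from the originals by $O(\varepsilon)$ and are bounded), and absorb this additive error into the added identity by taking $a^{\varepsilon ij}=a^{ij}*\zeta_\varepsilon+k\varepsilon\delta_{ij}$ with $k$ large depending on $K_0,K,M,d$ (in your normalization: take $\eta$ of order $\varepsilon$ and $\delta$ small relative to $\varepsilon$); then \eqref{semidefinite 1} holds for the smoothed data with exactly the constant $K_0$, and (iii) falls out of the same inequality. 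As written, your proof only yields (iv) with a strictly larger constant; that would in fact suffice for the application (only $\varepsilon$-independence is used later), but it does not prove the lemma as stated.
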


\begin{proof}
The proofs of the two statements containing 
Assumptions \ref{assumption systemparabolicity} 
and \ref{assumption antisymmetric}, respectively, go in essentially the same way, 
therefore we only detail the former. Let $\zeta$ be a nonnegative smooth function on $\bR^d$ 
with unit integral and support in the unit ball, 
and let $\zeta_{\varepsilon}(x)=\varepsilon^{-d}\zeta(x/\varepsilon)$. 
Define 
$$
b^{(\varepsilon)i}=b^i\ast\zeta_{\varepsilon},\,
c^{(\varepsilon)}=c\ast\zeta_{\varepsilon},\,
\sigma^{(\varepsilon)i}=\sigma^{i}\ast\zeta_{\varepsilon},\,
\nu^{(\varepsilon)}=\nu\ast\zeta_{\varepsilon},\,
h^{(\varepsilon)i}=h^i\ast\zeta_{\varepsilon}, 
$$
and $ a ^{ \varepsilon ij}=a^{ij}\ast\zeta_{\varepsilon}+k\varepsilon\delta_{ij}$ 
with a constant $k>0$ determined later, where $\delta_{ij}$ is the Kronecker symbol and 
`$\ast$' means the convolution in the variable $x\in\bR^d$. Since we have mollified 
functions which are bounded and Lipschitz continuous, 
the mollified functions, together with $a^{ \varepsilon ij}$ and 
$D_ka^{ \varepsilon ij}$,  
satisfy conditions (i) and (ii). Furthermore, 
$$
|\sigma^{(\varepsilon)ir}\nu^{(\varepsilon)r}-\sigma^{ir}\nu^{r}|
\leq|\sigma^{(\varepsilon)i}-\sigma^i||\nu^{(\varepsilon)}|+
|\sigma^i||\nu^{(\varepsilon)}-\nu|\leq2K^2\varepsilon, 
$$
for every $i=1,...,d$. 
Similarly, 
$$
|\sigma^{(\varepsilon)ir}\sigma^{(\varepsilon)jr}
-\sigma^{ir}\sigma^{jr}|\leq2K^2\varepsilon,
\quad 
|b^{(\varepsilon)i}-b^{i}|\leq K\varepsilon,
\quad
|h^{(\varepsilon)i}-h^i|\leq N\varepsilon 
$$
for all $i,j=1,2,...,d$. Hence setting 
$$
B^{ \varepsilon i}=
b^{(\varepsilon)i}-\sigma^{(\varepsilon)ik}\nu^{(\varepsilon)k}-h^{(\varepsilon)i}I_M,
$$
and using the notation $B^i$ for the same expression without 
the superscript 
`$ \varepsilon $',  
we 
have 
$$
|B^{ \varepsilon i}-B^{i}|\leq |b^{(\varepsilon)i}-b^{i}|
+|\sigma^{(\varepsilon)ir}\nu^{(\varepsilon)r}-\sigma^{ir}\nu^{r}|
+\sqrt{M}|h^{(\varepsilon)i}-h^i|\leq R\varepsilon, 
$$
$$
|B^{(\varepsilon)i}+B^{i}|\leq R
$$
with a constant $R=R(M,K)$. Thus for any $z_1$,...,$z_d$ vectors 
from $\bR^M$ 
$$
|\langle B^{ \varepsilon i} z_i\rangle^2-\langle B^{i} z_i\rangle^2|
=|\langle (B^{ \varepsilon i} -B^{i})z_i,
(B^{ \varepsilon j}+B^{j})z_j\rangle|
$$
$$
\leq |B^{ \varepsilon i} -B^{i}||B^{ \varepsilon j}+B^{j}|
\langle z_i\rangle\langle z_j\rangle
\leq dR^2\varepsilon\sum_{i=1}^d\langle z_i\rangle^2.  
$$
Therefore  
$$
\langle B^{ \varepsilon i} z_i\rangle^2
\leq 
\langle B^i z_i\rangle^2+C_1\varepsilon \sum_{i=1}^d\langle z_i\rangle^2 
$$
with a constant $C_1=C_1(M,K,d)$. Similarly, 
$$
\sum_{i,j}(2a^{ \varepsilon ij}-\sigma^{(\varepsilon)ik}\sigma^{(\varepsilon)jk})\langle z_i,z_j\rangle
$$
$$
\geq\sum_{i,j}(2a^{ij}-\sigma^{ik}\sigma^{jk})\langle z_i,z_j\rangle
+(k-C_2)\varepsilon \sum_i\langle z_i\rangle^2 
$$
with a constant $C_2=C_2(K,m,d)$. Consequently, 
$$
\langle(\beta^{ \varepsilon i}-h^{(\varepsilon)i}I_M)z_i\rangle^2
\leq
\langle B^i z_i\rangle^2+C_1\varepsilon \sum_{i=1}^d\langle z_i\rangle^2
$$
$$
\leq K_0\sum_{i,j=1}^d\alpha^{ij}\langle z_i,z_j\rangle
+C_1\varepsilon \sum_{i=1}^d\langle z_i\rangle^2
$$
$$
\leq K_0\sum_{i,j=1}^d\alpha^{ \varepsilon ij}\langle z_i,z_j\rangle
+(K_0(C_2-k)+C_1)\varepsilon \sum_{i=1}^d\langle z_i\rangle^2. 
$$
Choosing $k$ such that $K_0(C_2-k)+C_1=-K_0$ we get 
$$
\langle(\beta^{ \varepsilon i}-h^{(\varepsilon)i}I_M)z_i\rangle^2
+K_0\varepsilon \sum_{i=1}^d\langle z_i\rangle^2
\leq K_0\sum_{i,j=1}^d\alpha^{ \varepsilon ij}\langle z_i,z_j\rangle.  
$$
Hence statements (iii) and (iv) follow immediately. 
\end{proof}

Now we start with the proof of the existence of  
solutions which are   $W^m_p(\bR^{d},\bR^{M})$-valued 
if the Assumptions \ref{assumption systemcoeff}, 
\ref{assumption systemdata} 
and \ref{assumption systemparabolicity} hold with 
$m\geq1$. 
First we make the additional assumptions 
that $\psi$, $f$ and $g$ vanish for $|x|\geq R$ for some $R>0$, 
and that  $q\in[2,\infty)$ and 
\begin{equation}                                     \label{4.14.3.14}
E|\psi|_{W^m_p}^{q}+E\mathcal K^{q}_{m,q}(T)<\infty.  
\end{equation}
 For each $\varepsilon>0$ we consider the system 
$$                                          
du^{\varepsilon}_{t}=
[\sigma^{(\varepsilon)ir}_{t}D_{i}u^{\varepsilon}_{t}
+\nu^{(\varepsilon)r}_{t}u^{\varepsilon}_{t}
+g^{(\varepsilon)r}_{t}]\,dw^{r}_{t}
$$
\begin{equation}                                         \label{2.13.3.14}
+\big[ a^{ \varepsilon ij}_{t}D_{ij}u^{\varepsilon}_{t}
+b^{(\varepsilon)i}_{t}D_{i} u^{\varepsilon}_{t} 
 +f^{(\varepsilon)}_{t}\big]\,dt 
\end{equation}  
with initial condition
\begin{equation}                                                                     \label{3.13.3.14}
u_0^{(\varepsilon)}=\psi^{(\varepsilon)}, 
\end{equation}
where the coefficients are taken from Lemma \ref{lemma smoothing}, 
and 
$\psi^{(\epsilon)}$, $f^{(\epsilon)}$ and $g^{(\epsilon)}$ are defined 
as the convolution of $\psi$, $f$ and $g$, 
respectively,  
with $\zeta_{\varepsilon}(\cdot)=\varepsilon^{-d}\zeta(\cdot/\varepsilon)$ 
for  $\zeta\in C^{\infty}_{0}(\bR^d)$   
taken from the proof of
 Lemma \ref{lemma smoothing}. 
By Theorem \ref{theorem L_2} the above equation has a unique solution 
$u^{\varepsilon}$, 
which is a $W^n_2(\bR^{d},\bR^{M})$-valued continuous process for all $n$. 
Hence, by Sobolev embeddings, $u^{\varepsilon}$ 
is a $W^{m+1}_p(\bR^{d},\bR^{M})$-valued 
continuous process, and therefore we can use 
Lemma \ref{lemma estimate} to get
\begin{equation}                                                                 \label{1.13.3.14}
E\sup_{t\in[0,T]}|u^{\varepsilon}_t|_{W^n_{p' }}^{q}
\leq N(E|\psi^{(\varepsilon)}|_{W^n_{p' }}^{q}
+E(\mathcal K^{ \varepsilon }_{n, p' })^{q}(T)) 
\end{equation}
for $p'\in\{p,2\}$ and $n=0,1,2...m$, where $K^{ \varepsilon }_{n,{p'}}$ 
is defined by \eqref{02.25} with $f^{(\varepsilon)}$ 
and $g^{(\varepsilon)}$ in place of $f$ and $g$, respectively. 
Keeping in mind that $T^{1/r}\leq\max\{1,T\}$, 
and using basic properties of convolution, 
we can conclude that 
\begin{equation}                                                                  \label{1.17.3.14}
E\left(\int_0^T|u^{\varepsilon}_t|_{W^n_{p'}}^r\,dt\right)^{{q}/r}
\leq N(E|\psi|_{W^n_{p'}}^{q}+E\mathcal K^{q}_{n,{p'}}(T))
\end{equation}
for any $r>1$ and with $N=N(m,p,q,d,M,K,T)$ not depending on $r$. 

For  integers $n\geq0$, and  any $r,q\in(1,\infty)$ 
 let $\bH^n_{p,r,q}$ be the space of 
$\bR^M$-valued functions $v=v_t(x)=(v^i_t(x))_{i=1}^M$ on 
$\Omega\times[0,T]\times\bR^d$ such that $v=(v_t(\cdot))_{t\in[0,T]}$ 
are 
$W^n_p(\bR^{d},\bR^{M})$-valued predictable 
processes and 
$$
|v|^q_{\bH^n_{p,r,q}}=E\left(\int_0^T|v_t|^r_{W^n_p}\,dt\right)^{q/r}<\infty. 
$$
Then $\bH^n_{p,r,q}$ with the norm defined above is a 
reflexive Banach space for each $n\geq0$ and 
$p,r,q\in(1,\infty)$. 
We use the notation $\bH^n_{p,q}$ for $\bH^n_{p,q,q}$.

By  Assumption \ref{assumption systemdata}  
the right-hand side of \eqref{1.17.3.14} is finite for 
$p'=p$ and  also for $p=2$ 
since $\psi$, $f$ and $g$ vanish for $|x|\geq R$.  
Thus there exists a sequence $(\varepsilon_k)_{k\in\mathbb{N}}$ 
such that $\epsilon_k\rightarrow 0$ and for $p'=p,2$ and 
integers $r>1$ and  
$n\in[0,m]$ 
the sequence $v^k:=u^{ \varepsilon_k }$ 
converges weakly in $\bH^n_{{p'}r,q}$ to some $v\in H^m_{p',r,q}$, 
which therefore also satisfies
$$
E\left(\int_0^T|v_t|_{W^n_{p'}}^r\,dt\right)^{q/r}
\leq N(E|\psi|_{W^n_{p'}}^{q}+E\mathcal K^{q}_{n,q}(T)) 
$$
for $p'=p,2$ and integers $r>1$. 
Using this with $p'=p$ and letting $r\rightarrow\infty$ 
by Fatou's lemma we obtain
\begin{equation}                                   \label{esssup}
E\esssup_{t\in[0,T]}|v_t|^q_{W^n_{p}}
\leq N(E|\psi|_{W^n_{p}}^{q}
+E\mathcal K^{q}_{n,p}(T))\quad\text{for $n=0,1,...,m$}.
\end{equation}

Now we are going to show that a suitable stochastic modification of $v$ is a solution 
of \eqref{system}-\eqref{systemini}. To this end we fix an $\bR^M$-valued function  
$\varphi$  in $C_0^{\infty}(\bR^d)$   
and a predictable  real-valued process 
$(\eta_t)_{t\in[0,T]}$, which is bounded by some 
constant $C$,  
and define the functionals $\Phi$, $\Phi_k$, $\Psi$ 
and $\Psi_k$ over $\bH^1_{p,q}$ by 
$$
\Phi_k(u)=E\int_0^T\eta_t
\int_0^t\{-(a^{ \varepsilon_k ij}_sD_iu_s,D_j\varphi)                                                
+(\bar b^{ \varepsilon_k i}_sD_iu_s+c^{(\varepsilon_k)}_su_s,\varphi)\}\,ds\,dt, 
$$
$$
\Phi(u)=E\int_0^T\eta_t
\int_0^t\{-(a^{ij}_sD_iu_s,D_j\varphi)+(\bar b^{i}_sD_iu_s+c_su_s,\varphi)\}\,ds\,dt,                                                
$$
$$
\Psi(u)=E\int_0^T\eta_t\int_0^t(\sigma^{ir}_{t}D_{i}u_{t}+\nu^{r}_{t}u_{t},
\varphi)\,dw^{r}_{t}\,dt
$$
$$
\Psi_k(u)=E\int_0^T\eta_t\int_0^t(\sigma^{(\varepsilon_k)ir}_{t}D_{i}u_{t}
+\nu^{(\varepsilon_k)r}_{t}u_{t},
\varphi)\,dw^{r}_{t}\,dt
$$
for $u\in\bH^1_{p,q}$ for each $k\geq1$, where  
$\bar b^{ \varepsilon    i}=b^{(\varepsilon)i}-D_ja^{ \varepsilon ij}I_{M}$.
 By the Bunyakovsky-Cauchy-Schwarz and 
the Burkholder-Davis-Gundy inequalities for all $u\in 
\bH^1_{p,q}$ we have
$$
\Phi(u)\leq CNT^{2-1/q}|u|_{\bH^1_{p,q}}|\varphi|_{W^1_{\bar p}}, 
$$
\begin{align*}
\Psi(u)\leq&CTE\sup_{t\leq T}|\int_0^t
(\sigma^{ir}_{t}D_{i}u_{t}+\nu^{r}_{t}u_{t},
\varphi)\,dw^{r}_{t}|
\\
\leq& 3CT
E\left\{\int_0^T\sum_{r=1}^{\infty}
(\sigma^{ir}_{t}D_{i}u_{t}+\nu^{r}_{t}u_{t},\varphi)^2\,dt
\right\}^{1/2}\\
\leq&
3CTE\left\{\int_0^T\left(\int_{\bR^d}
|\langle\sigma^{ir}_{t}D_{i}u_{t}+\nu^{r}_{t}u_{t},\varphi\rangle|_{l_2}\,dx\right)^2\,dt
\right\}^{1/2}\\
\leq&CTNE\left\{\int_0^T|u_t|^2_{W^1_{p}}|\varphi|^2_{W^1_{\bar p}}\,dt\right\}^{1/2}
\leq CNT^{q/2}|u|_{\bH^1_{p,q}}|\varphi|_{W^1_{\bar p}}\\
\end{align*}
with a constant $N=N(K,d,M)$, where $\bar p=p/(p-1)$.   
(In the last inequality we make use of the assumption $q\geq2$.)
Consequently, 
$\Phi$  and $\Psi$ are continuous 
linear functionals over $\bH^1_{p,q}$, 
and 
therefore 
\begin{equation}                                \label{5.29.10.12}
\lim_{k\to\infty}\Phi(v^{k})=\Phi(v),
\quad
\lim_{k\to\infty}\Psi(v^{k})=\Psi(v). 
\end{equation} 
Using statement (i) of Lemma \ref{lemma smoothing}, 
we get 
\begin{equation}                              \label{6.29.10.12}     
|\Phi_k(u)-\Phi(u)|+
|\Psi_k(u)-\Psi(u)|
\leq N\varepsilon_k|u|_{\bH^1_{p,q}}|\varphi|_{W^1_{\bar p}}
\end{equation}
for all $u\in \bH^1_{p,q}$ with a constant $N=N(k,d,M)$. 
Since $u^{\varepsilon}$ is the solution of   \eqref{2.13.3.14}-\eqref{3.13.3.14}, 
we have 
$$
E\int_0^T\eta_t(v_t^{k},\varphi)\,dt
=E\int_0^T\eta_t(\psi^{k},\varphi)\,dt
+\Phi(v^{k})+\Psi(v^{k})
$$
\begin{equation}                                                             \label{7.13.3.14}
+F(f^{(\varepsilon_k)})+G(g^{(\varepsilon_k)}) 
\end{equation}
for each $k$, where 
$$
F(f^{(\varepsilon_k)})=E\int_0^T\eta_t\int_0^t(f_s^{(\varepsilon_k)},\varphi)\,ds\,dt,
$$
$$
G(g^{(\varepsilon_k)})=E\int_0^T\eta_t\int_0^t(g^{(\varepsilon_k)r}_s,\varphi)\,dw_s^r\,dt. 
$$
Taking into account that $|v^{k}|_{\bH^1_{p,q}}$ is a bounded sequence, 
from \eqref{5.29.10.12} and \eqref{6.29.10.12}
we obtain 
\begin{equation}                            \label{2.30.10.12}
\lim_{k\to\infty}\Phi_{n}(v^{k})=\Phi(v), 
\quad 
\lim_{k\to\infty}\Psi_{k}(v^{k})=\Psi(v). 
\end{equation}
One can see similarly (in fact easier), that 
\begin{equation}                                                                  \label{4.13.3.14}
\lim_{k\to\infty}E\int_0^T\eta_t(v^{k}_t,\varphi)\,dt
=E\int_0^T\eta_t(v_t,\varphi)\,dt,
\end{equation}
\begin{equation}                                                                  \label{5.13.3.14}
\lim_{k\to\infty}
E\int_0^T\eta_t(\psi^{(\varepsilon_k)}_t,\varphi)\,dt
=E\int_0^T\eta_t(\psi,\varphi)\,dt,
\end{equation}
\begin{equation}                                                                  \label{6.13.3.14}
\lim_{k\to\infty}F(f^{(\varepsilon_k)})=F(f),\quad 
\lim_{k\to\infty}G(g^{(\varepsilon_k)})=G(g).   
\end{equation}

Letting $k\to\infty$ in \eqref{7.13.3.14},  
and using  \eqref{2.30.10.12} through \eqref{6.13.3.14} 
we obtain 
$$
E\int_0^T\eta_t(v_t,\varphi)\,dt 
$$
$$    
=E\int_0^T\eta_t\Big\{(\psi,\varphi)
+\int_0^t\big[-(a^{ij}_sD_iu_s,D_j\varphi)+
(\bar b^{i}_sD_iu_s+c_su_s+f_s,\varphi)\big]\,ds
$$
$$
+\int_0^t(\sigma^{ir}D_iv_s+\nu^rv_s,\varphi)\,dw_s^r\Big\}\,dt
$$
for every bounded predictable process $(\eta_t)_{t\in[0,T]}$ and 
$\varphi$ from $C_0^{\infty}$.  
Hence for each $\varphi\in C_0^{\infty}$
$$
(v_t,\varphi)  
=(\psi,\varphi)
+\int_0^t\big[-(a^{ij}_sD_iv_s,D_j\varphi)
+(\bar b^{i}_sD_iv_s+c_sv_s+f_s,\varphi)\big]\,ds
$$
$$
+\int_0^t(\sigma^{ir}D_iv_s+\nu^rv_s+g^r_s,\varphi)\,dw_s^r
$$
holds for $P\times dt$ almost every 
$(\omega,t)\in\Omega\times[0,T]$.  Substituting here 
$(-1)^{|\alpha|}D^{\alpha}\varphi$ 
in place of $\varphi$ for a multi-index 
$\alpha=(\alpha_1,...,\alpha_d)$ of length 
$|\alpha|\leq m-1$ and integrating by parts, we see that 
\begin{equation}                                                                              \label{1.14.3.14}
(D^{\alpha}v_t,\varphi)  
=(D^{\alpha}\psi,\varphi)
+\int_0^t\big[-(F^j_s,D_j\varphi)+(F^0_s,\varphi)\big]\,ds
+\int_0^t(G^r_s,\varphi)\,dw_s^r
\end{equation}
for $P\times dt$ almost every 
$(\omega,t)\in\Omega\times[0,T]$, where, 
owing to the fact that \eqref{esssup} also holds with $2$
in  place of $p$, $F^i$ and $(G^{r})_{r=1}^{\infty}$  
are predictable processes with values in $L_2$-spaces for $i=0,1,...,d$, such that 
$$
\int_0^T\big(\sum_{i=0}^d|F^i_s|^2_{L_2}+|G_s|^2_{L_2}
\big)\,ds<\infty \quad\text{(a.s.)}. 
$$
Hence the theorem 
on It\^o's formula from \cite{KrR1981}  implies that in the equivalence 
class of $v$ in $\bH^m_{2,q}$ there is a 
$W^{m-1}_2(\bR^{d},\bR^{M})$-valued continuous 
 process, $u=(u_t)_{t\in[0,T]}$, and  
\eqref{1.14.3.14} with $u$ in place of $v$ holds 
for any $\varphi\in C_0^{\infty}(\bR^d)$
almost surely for all $t\in[0,T]$. After that
an application of Lemma \ref{lemma itoformula} 
to $D^{\alpha}u$ for $|\alpha|\leq m-1$ yields that 
$D^{\alpha}u$ is an $L_p(\bR^{d},\bR^{M})$-valued, 
strongly continuous process 
for every $|\alpha|\leq m-1$, i.e.,  
$u$ is a $W^{m-1}_p(\bR^{d},\bR^{M})$-valued 
strongly continuous process. This, \eqref{esssup},
and the denseness of $C_{0}^{\infty}$ in 
$W^{m }_p(\bR^{d},\bR^{M})$ implies that (a.s.) $u$ is a
$W^{m}_p(\bR^{d},\bR^{M})$-valued 
weakly continuous process and \eqref{estimate of main theorem}
holds.

To prove the theorem without the assumption that
  $\psi$, $f$ and $g$ have compact support,  
 we take a 
$\zeta\in C^{\infty}_{0}(\bR^{d})$
such that
$\zeta(x)=1$ for $|x|\leq1$ and $\zeta(x)=0$ for $|x|\geq2$, 
and define $\zeta_{n}(\cdot)=\zeta(\cdot/n)$ for $n>0$. 
Let $u(n)=(u_t(n))_{t\in[0,T]}$ denote the solution 
of \eqref{system}-\eqref{systemini} 
with
$\zeta_{n}\psi$, $\zeta_{n}f$ and $\zeta_{n}g$ in place of $\psi$, $f$ and $g$,
respectively. By virtue of what we have proved above, $u(n)$ is a weakly continuous 
$W^{m}_{p}(\bR^{d},\bR^{M})$-valued process,  and 
$$
      E\sup_{t\in[0,T]}|u_t(n)-u_t(l)|^{q}_{W^m_p}
\leq N E|(\zeta_{n}-\zeta_{l})\psi |^{q}_{W^m_p}
$$
$$
+ NE \big(
\int_{0}^{T}\{|(\zeta_n-\zeta_l)f_s|^p_{W^m_p}
+|(\zeta_{n}-\zeta_{l})g_s|_{W^{m+1}_p }^{p }\}\,ds
\big)^{q/p}.
$$
Letting here $n,l\to\infty$ and applying Lebesgue's theorem on dominated convergence  
in the left-hand side, we see that the right-hand side of the inequality 
tends to zero. Thus for a subsequence $n_k\to\infty$ we have that  
$u_t(n_k)$ converges 
strongly in $W^m_p(\bR^{d},\bR^{M})$, uniformly in $t\in[0,T]$, 
to a process $u$. 
Hence $u$ is a weakly continuous $W^m_p(\bR^{d},\bR^{M})$-valued process.
 It is easy to show that it solves 
\eqref{system}-\eqref{systemini} and satisfies \eqref{estimate of main theorem}.

By using a standard stopping time argument we can dispense with  condition 
\eqref{4.14.3.14}. Finally we can prove estimate \eqref{estimate of main theorem} 
for $q\in(0, 2 )$ 
by applying Lemma 3.2 from \cite{GK2003} in the same way 
as it is used there to prove the corresponding 
estimate in the case $M=1$. 
The proof of the Theorem \ref{theorem systemmain} is complete. 
We have already showed the uniqueness statement of Theorem 
\ref{theorem antisymmetric}, the proof of the other assertions 
goes in the above way with obvious changes.  

\noindent 
 {\bf Acknowledgement.} The results of this paper were presented at the 
 9th International Meeting on ``Stochastic Partial Differential Equations and Applications"  
in Levico Terme in Italy, in January, 2014, and at the meeting on ``Stochastic Processes and
Differential Equations in Infinite Dimensional Spaces" 
in King's College London, in March, 2014. The authors would like to thank the organisers  
for these possibilities.

\end{document}